\author{Mikhail Patrakeev\footnote{Institute of Mathematics and Mechanics of UB RAS and Ural Federal University, Ekaterinburg, Russia; patrakeev@mail.ru}\footnote{Author was supported by Program of Mathematical Sciences Department, RAS (Project 12-T-1-1003) and Young researchers grant of Institute of Mathematics and Mechanics of UB RAS.}}
\title{Metrizable images of the Sorgenfrey line\footnote{2010 Mathematics Subject Classification: Primary 54G99 and 54C10; Secondary 54E99 and 54H05. Keywords: Sorgenfrey line, metrizable space, open map, closed map, quotient map, Baire space, space of irrationals, Polish space, Lusin scheme, Choquet game, scattered space. }\\}
\date{}
\begin{document}
\hyphenation{pa-ra-com-pact}

\renewcommand{\proofname}{\textup{\textbf{Proof}}}
\renewcommand{\abstractname}{\textup{Abstract}}
\renewcommand{\refname}{\textup{References}}

\maketitle
\begin{abstract}
We give descriptions of metrizable topological spaces that are images of the Sorgenfrey line under continuous maps of different types (open, closed, quotient and others).
To obtain this descriptions, we introduce the notion of a Lusin $\pi\!$-base; the Sorgenfrey line and the Baire space have Lusin $\pi\!$-bases, and if a space $X$ has a Lusin $\pi\!$-base, then for each nonempty Polish space $Y,$ there exists a continuous open map $f:X\xrightarrow{\text{ onto }}Y.$%
\end{abstract}

\section{Introduction}
\label{subsec1}

The Sorgenfrey line is the real line with topology whose base consists of all half-open intervals  of the form $[a,b),$ where $a<b.$ We study questions of the following form:%
\medskip\\
\emph{Let $\mathcal{K}$ be a class of continuous maps and suppose $f\in\mathcal{K}$ is a map from the Sorgenfrey  line onto a metrizable space $X.$ What can we say about $X$?}%
\medskip

The answers to this questions for some classes $\mathcal{K}$ are presented in Table~1; all proofs are given in this paper (see the third column in Table~1).

\renewcommand{\arraystretch}{0}%
\begin{center}
\begin{tabular}{|c|c|c|}
\hline
\multicolumn{1}{|c|}{\it \strut $\mathcal{K},$ a class of}&\multicolumn{1}{|c|}{\it \strut\rule{0pt}{14pt}Description of metrizable images}&\multicolumn{1}{|c|}{\it Reference}\\
\multicolumn{1}{|c|}{\it \strut continuous maps}&\multicolumn{1}{|c|}{\it of the Sorgenfrey line}&\multicolumn{1}{|c|}{\it to the proof}\\
{}&\multicolumn{1}{|c|}{\it \strut\rule[-7pt]{0pt}{10pt}under maps from class $\mathcal{K}$}&\multicolumn{1}{|c|}{}
\\
\hline
\rule{0pt}{13pt}\strut All maps&{}&{}%
\\
\cline{1-1} 
\rule{0pt}{13pt}\strut Quotient maps&Nonempty analytic&{}
\\
\cline{1-1}
\rule{0pt}{13pt}\strut Pseudo-open maps&spaces&
\\
\cline{1-1}
\rule{0pt}{13pt}\strut Biquotient maps&{}&Corollary~\ref{s8.}
\\
\cline{1-2}
{}&\rule{0pt}{13pt}\strut Nonempty absolute Borel spaces&{}
\\
\strut One-to-one maps{}&\strut in which every nonempty&{}
\\
{}&\rule[-6pt]{0pt}{10pt}\strut open subset is uncountable&
\\
\hline
\rule{0pt}{13pt}\strut Open maps&\rule[-6pt]{0pt}{13pt}Nonempty Polish spaces&Corollary~\ref{s14.}
\\
\hline
\rule{0pt}{13pt}\strut Closed maps&\rule{0pt}{13pt}Nonempty at most countable&Corollary~\ref{s23.}
\\
\cline{1-1}
\rule{0pt}{12pt}\strut Closed-open maps&\rule[-6pt]{0pt}{10pt}Polish spaces&
\\
\hline
\rule{0pt}{13pt}\strut Open countable-to-one maps&{}&Corollary~\ref{s17.}
\\
\cline{1-1}
\rule{0pt}{13pt}\strut Open finite-to-one maps&There are no such spaces&{}\\
\cline{1-1}\cline{3-3}
\rule{0pt}{13pt}\strut Perfect maps&{}&Remark~\ref{z24.}
\\
\hline
\end{tabular}\\ \bigskip
Table~1
\end{center}

Some of these results were obtained earlier: the description of metrizable images of the Sorgenfrey line under all continuous maps was received by D.\,Motorov~\cite{1}; S.\,Svetlichnyi proved that every  metrizable image of the Sorgenfrey line under a continuous open map is Polish~\cite{2}; the author of this paper and N.\,Velichko independently constructed a continuous open map from the Sorgenfrey line onto the real line~\cite{patr,vel}, and N.\,Velichko proved in~\cite{vel} that for each such map, there is a point with preimage of cardinality $2^\omega$. The description of metrizable images of the Sorgenfrey line under continuous one-to-one maps can be easily derived from results of D.\,Motorov~\cite{1}.

It is interesting to note that if a class $\mathcal{K}$ is among first six classes listed in Table~1 (i.e., in case of all, quotient, pseudo-open, biquotient, one-to-one or open continuous maps), then the metrizable images of the Sorgenfrey line under maps from class $\mathcal{K}$ coincides with
the metrizable images of the Baire space under maps from class $\mathcal{K}.$ One of the reasons for this similarity is that both the Sorgenfrey line and the Baire space have Lusin \mbox{$\pi\!$-bases} (see Definition~\ref{opr4.}, Example~\ref{pri5.} and Lemma~\ref{l5.5}), and that if a space $X$ has a Lusin $\pi\!$-base, then  for each nonempty Polish space $Y,$ there exists a continuous open map $f:X\xrightarrow{\text{ onto }}Y$ (see Theorem~\ref{t9}).

\section{Notations and terminology}
\label{subsec1.5}

By $\mathbb{N}$ we denote the set of natural numbers, where $0\in\mathbb{N}.$ Let $A$ be any set and $n\in\mathbb{N};$ then by $A^n$ $(A^{<\mathbb{N}},$ $A^\mathbb{N})$ we denote the set of sequences of length $n$ from $A$ (the set of finite sequences from $A,$ the set of countably infinite sequences from $A,$ respectively).  The length of a sequence $s$ is denoted by ${\rm length}(s).$ We assume that there exists a (unique) sequence of length zero and this sequence coincides with the empty set $\varnothing;$ in particular, $A^0=\{\varnothing\}$ and ${\rm length}(\varnothing)=0.$ Suppose $s=\langle s_0,\ldots,s_{n-1}\rangle\in A^n$ and $a\in A;$ then by $s\hat{\ }a$ we denote the sequence $\langle s_0,\ldots,s_{n-1},a\rangle\in A^{n+1},$ and by ${s|m}$ we denote the sequence $\langle s_0,\ldots,s_{m-1}\rangle\in A^m,$ where $m\leq n.$  Likewise, if $x=\langle x_0,x_1,\ldots\rangle\in A^\mathbb{N},$ then the sequence $\langle x_0,\ldots,x_{n-1}\rangle$ is denoted by ${x|n};$  in particular, $x|0=\varnothing.$

The Baire space is the space $(\mathbb{N}^{\mathbb{N}},\tau_{\scriptscriptstyle \mathcal{N}}),$ where the topology  $\tau_{\scriptscriptstyle \mathcal{N}}$ is generated by the base $(\mathcal{N}_s)_{s\in\mathbb{N}^{<\mathbb{N}}}$ and $$\mathcal{N}_s:=\{x\in\mathbb{N}^\mathbb{N}:x|n=s\text{ for some }n\in\mathbb{N}\};$$
this base is called the standard base for the Baire space. Thus the Baire space is a countable infinite topological power of a countably infinite discrete space; note also that the Baire space is homeomorphic to the space of irrational numbers~\cite[Ex.\,3.4]{kech}. The set of reals is denoted by $\mathbb{R},$ a real segment and half-intervals are denoted by $[a,b],$ $[a,b),$ and $(a,b].$ The Sorgenfrey line, $\mathbf{S},$ is the space $(\mathbb{R},\tau_{\scriptscriptstyle \mathbf{S}})$ whose topology $\tau_{\scriptscriptstyle \mathbf{S}}$ is generated by the base $\big\{[a,b):a,b\in\mathbb{R}, a<b \big\}.$ A Polish space is a separable completely metrizable space.
A space $X$ is called analytic (absolute Borel) iff $X$ is homeomorphic to some analytic subset (some Borel subset) of some Polish space.

A map $f:X\rightarrow Y$  is called open (closed) iff for any open (closed) set $U\subseteq X,$ its image $f(U)$ is open (closed) in $Y;$ a map $f:X\rightarrow Y$  is called closed-open iff $f$ is closed and open. 
A surjective map $f:X\rightarrow Y$ is called quotient iff for any set $A\subseteq Y,$ the preimage $f^{-1}(A)$ is open in $X$ if and only if $A$ is open in $Y.$  A map $f:X\rightarrow Y$ is said to be finite-to-one (countable-to-one) iff for each $y\in Y,$ the preimage $f^{-1}(y)$ is finite (at most countable). The definitions of pseudo-open and biquotient maps can be found in the book~\cite{arh-pon}; we only note that in the class of continuous maps every closed map is pseudo-open, every open map is biquotient, every biquotient map is pseudo-open, and every pseudo-open map is quotient. The symbol ``$\mathsurround=0pt:=$'' means ``equals by definition''.  Other terminology can be found in the books of R.~Engelking~\cite{eng} and A.~Kechris ~\cite{kech}.

\section{Sorgenfrey line and spaces with Lusin $\pi\!$-base}
\label{subsec2}

The construction of a continuous open map from the Sorgenfrey line onto the real line~\cite{patr} uses some special family of subsets of the Sorgenfrey line. A generalization of this construction allows to build a continuous open map from  any space with analogous family of subsets onto any nonempty Polish space (Theorem~\ref{t9}). We shall call such family a Lusin \mbox{$\pi\!$-base} because this family is a Lusin scheme and a $\pi\!$-base simultaneously.

Recall that a \emph{Lusin scheme} on a set $X$ is a family $(V_s)_{s\in\mathbb{N}^{<\mathbb{N}}}$ of subsets of $X$ such that:
\begin{itemize}
  \item[(L0)] $V_s\supseteq V_{s\hat{\ }n}$ for all $s\in\mathbb{N}^{<\mathbb{N}}$ and  $n\in\mathbb{N}.$
  \item[(L1)] $V_{s\hat{\ }i}\cap V_{s\hat{\ }j}=\varnothing$ for all $s\in\mathbb{N}^{<\mathbb{N}}$ and $i\neq j$ in $\mathbb{N}.$
\end{itemize}
Consider a special case of Lusin scheme:
\begin{opr}\label{o1}
A \emph{strict Lusin scheme} on a set $X$ is a Lusin scheme $(V_s)_{s\in\mathbb{N}^{<\mathbb{N}}}$ on $X$ such that:
\begin{itemize}
  \item[\textup{(L2)}] $\mathsurround=0pt V_\varnothing =X.$
  \item[\textup{(L3)}] $\mathsurround=0pt V_s=\bigcup_{n} V_{s\hat{\ }n}$ for all $s\in\mathbb{N}^{<\mathbb{N}}.$
  \item[\textup{(L4)}] $\bigcap_{n}V_{x|n}$ is a singleton for all $x\in\mathbb{N}^\mathbb{N}.$
\end{itemize}
\end{opr}

\begin{pri}\label{pr2}
The standard base $(\mathcal{N}_s)_{s\in\mathbb{N}^{<\mathbb{N}}}$ for the Baire space $(\mathbb{N}^{\mathbb{N}},\tau_{\scriptscriptstyle \mathcal{N}})$ is a strict Lusin scheme on the set  $\mathbb{N}^\mathbb{N}.$
\end{pri}

This example is not random and the next lemma shows that every strict Lusin scheme is closely related to the Baire space:

\begin{lem}\label{l3.}
Let $(V_s)_{s\in\mathbb{N}^{<\mathbb{N}}}$ be a strict Lusin scheme on a set $X$ and let $\tau$ be the  topology on~$X$ generated by the  subbase $\{V_s:s\in\mathbb{N}^{<\mathbb{N}}\}.$ Then the space  $(X,\tau)$ is homeomorphic to the Baire space and each set $V_s$ is closed-open in $(X,\tau).$
\end{lem}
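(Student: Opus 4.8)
The plan is to build an explicit homeomorphism $\varphi\colon X\to\mathbb{N}^{\mathbb{N}}$ that reads off, for each point of $X,$ the unique branch of the scheme passing through it. First I would fix $x\in X$ and define $\varphi(x)$ by recursion: by (L2) we have $x\in V_\varnothing,$ and whenever $x\in V_s,$ axiom (L3) supplies an index $n$ with $x\in V_{s\hat{\ }n}$ while (L1) makes this $n$ unique. Iterating yields a single sequence $\varphi(x)=\langle n_0,n_1,\ldots\rangle\in\mathbb{N}^{\mathbb{N}}$ determined by the condition $x\in V_{\varphi(x)|k}$ for every $k.$ The whole proof then rests on one equivalence: for $s$ with $m={\rm length}(s),$
\[
x\in V_s \iff \varphi(x)|m=s.
\]
The implication ``$\Leftarrow$'' is immediate from the defining condition, and ``$\Rightarrow$'' follows because (L0) forces $x\in V_{s|k}$ for all $k\le m,$ after which the uniqueness in (L1) identifies $\varphi(x)|m$ with $s$ by induction on $k.$

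Next I would check that $\varphi$ is a bijection, and this is exactly where (L4) is used. For injectivity, if $\varphi(x)=\varphi(y)$ then $x$ and $y$ both lie in $\bigcap_k V_{\varphi(x)|k},$ a singleton by (L4), so $x=y.$ For surjectivity, given $z\in\mathbb{N}^{\mathbb{N}},$ axiom (L4) makes $\bigcap_k V_{z|k}$ a singleton $\{x\};$ since $x\in V_{z|k}$ for every $k,$ the displayed equivalence gives $\varphi(x)|k=z|k$ for all $k,$ whence $\varphi(x)=z.$

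To promote $\varphi$ to a homeomorphism, I would rewrite the displayed equivalence as $\varphi(V_s)=\mathcal{N}_s$ (and $\varphi^{-1}(\mathcal{N}_s)=V_s$), using that $\mathcal{N}_s=\{y:y|m=s\}$ when $m={\rm length}(s).$ Because $\{V_s\}$ is a subbase for $\tau$ and $\{\mathcal{N}_s\}$ is a base, hence a subbase, for the Baire topology, a bijection that carries one subbase onto the other is automatically a homeomorphism: continuity of $\varphi$ reduces to $\varphi^{-1}(\mathcal{N}_s)=V_s\in\tau,$ and continuity of $\varphi^{-1}$ reduces to $\varphi(V_s)=\mathcal{N}_s$ being open in the Baire space. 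For the closed-open assertion, the same equivalence shows that, for each fixed $m,$ the family $\{V_t:{\rm length}(t)=m\}$ partitions $X,$ so $X\setminus V_s=\bigcup\{V_t:{\rm length}(t)=m,\ t\neq s\}$ is open; as $V_s$ is already open, it is closed-open. (Alternatively, transport along $\varphi$ the standard fact that each $\mathcal{N}_s$ is closed-open in the Baire space.)

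The argument is not deep; the only real obstacle is bookkeeping, namely invoking each axiom at precisely the right place — (L2) to start the recursion, (L3) to continue it, (L1) to keep every digit unique, and (L4) to convert the nested intersections into a genuine bijection — and being careful that $\mathcal{N}_s$ is exactly $\{y:y|m=s\}$ (so the lengths must agree), which is what makes $\varphi(V_s)=\mathcal{N}_s$ an equality rather than a mere inclusion. Once the displayed equivalence is verified, both conclusions of the lemma fall out directly.
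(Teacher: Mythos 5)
Your proposal is correct and takes essentially the same route as the paper: your recursively defined $\varphi$ is exactly the paper's map $\sigma$ (assigning to each point the unique branch of the scheme through it), and both arguments hinge on verifying that $\varphi$ is a bijection with $\varphi(V_s)=\mathcal{N}_s$ for all $s\in\mathbb{N}^{<\mathbb{N}}$, from which the homeomorphism and the closed-openness of the $V_s$ follow. You simply spell out details the paper leaves implicit, and your alternative partition argument for closed-openness is a harmless variant of the paper's transport of $\mathcal{N}_s$ along the homeomorphism.
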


\begin{proof} It follows from Definition~\ref{o1} that for each $x\in X,$ there is a unique sequence  $\sigma(x)\in\mathbb{N}^\mathbb{N}$ such that
$$\textstyle \{x\}=\bigcap_{n}V_{\sigma(x)|n}.$$
Consider the map $\sigma:X\rightarrow\mathbb{N}^\mathbb{N}$ defined in this way. This map is a bijection, and for all $s\in\mathbb{N}^{<\mathbb{N}},$ we have $\sigma(V_s)=\mathcal{N}_s,$ where $\mathcal{N}_s$ is an element of the standard base for the Baire space $(\mathbb{N}^{\mathbb{N}},\tau_{\scriptscriptstyle \mathcal{N}}).$ It follows that the map $\sigma:(X,\tau)\rightarrow (\mathbb{N}^{\mathbb{N}},\tau_{\scriptscriptstyle \mathcal{N}})$ is a homeomorphism. Since each set $\mathcal{N}_s$ is closed-open in the Baire space, we see that every set $V_s$ is closed-open in~$(X,\tau).$ \end{proof}

\begin{opr}\label{opr4.} A \emph{Lusin $\pi\!$-base} for a space $X$ is a strict Lusin scheme  $(V_s)_{s\in\mathbb{N}^{<\mathbb{N}}}$ on $X$ such that:
\begin{itemize}
\item[\textup{(L5)}] Each set  $V_s$ is open in $X.$
\item[\textup{(L6)}] For any point $x\in X$ and any its neighbourhood $O(x),$ there are  $s\in\mathbb{N}^{<\mathbb{N}}$ and $n_0\in\mathbb{N}$ such that $x\in V_s$ and $\bigcup_{n\geq n_0}V_{s\hat{\ }n}\subseteq O(x).$
\end{itemize}%
\end{opr}%
It is clear that every Lusin $\pi\!$-base $(V_s)_{s\in\mathbb{N}^{<\mathbb{N}}}$ for a space $X$ is a $\pi\!$-base for~$X$ (i.e., every $V_s$ is nonempty open, and for any nonempty open $U\subseteq X,$ there is $V_t$ such that $V_t\subseteq U$).

\begin{pri}\label{pri5.} The standard base $(\mathcal{N}_s)_{s\in\mathbb{N}^{<\mathbb{N}}}$ for the Baire space is  a Lusin $\pi\!$-base for the Baire space.
\end{pri}

\begin{lem}\label{l5.5}
The Sorgenfrey line has a Lusin $\pi\!$-base.
\end{lem}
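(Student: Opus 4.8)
The plan is to exhibit an explicit strict Lusin scheme of half-open intervals on $\mathbb{R}$ whose children always accumulate at the \emph{right} endpoint of their parent, and then to verify the conditions (L0)--(L6) of Definition~\ref{opr4.}; the right-facing accumulation is chosen precisely to match the right-facing basic neighbourhoods $[x,x+\varepsilon)$ of $\mathbf{S}.$ First I would set $V_\varnothing=\mathbb{R}$ and, fixing a bijection $\mathbb{N}\to\mathbb{Z},$ let the children of the root be the unit intervals $[m,m+1)$ with $m\in\mathbb{Z},$ reindexed by $\mathbb{N}.$ For every other node, that is, for a half-open interval $V_s=[a,b)$ with $a<b,$ I would use the geometric breakpoints $p_k=a+(1-2^{-k})(b-a),$ so that $p_0=a$ and $p_k\uparrow b,$ and put $V_{s\hat{\ }k}=[p_k,p_{k+1}).$ Each child $V_{s\hat{\ }k}$ then has length $2^{-(k+1)}(b-a),$ the longest being the leftmost one, of length $(b-a)/2.$

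Conditions (L0), (L1), (L2), (L3) and (L5) would be immediate from this construction: the children of each node are pairwise disjoint half-open intervals whose union is the parent, we have $V_\varnothing=\mathbb{R},$ and every $V_s$ is either $\mathbb{R}$ or a set $[a,b),$ hence open in $\mathbf{S}.$ For (L4) I would note that below the root every child is at most half as long as its parent, so along any branch $\langle j_0,j_1,\ldots\rangle$ the lengths of the nested intervals $[a_n,b_n):=V_{\langle j_0,\ldots,j_{n-1}\rangle}$ tend to $0;$ since moreover every child's right endpoint is strictly below its parent's, the left endpoints satisfy $a_n\uparrow x$ and the right endpoints satisfy $b_n\downarrow x$ with $b_n>x,$ for a single point $x,$ and a short check gives $\bigcap_n[a_n,b_n)=\{x\}.$

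The only real work is (L6), and the key observation is that $x$ need not lie in the tail $\bigcup_{k\ge n_0}V_{s\hat{\ }k}$ at all, but only in the parent $V_s.$ Given a point $x$ and a basic neighbourhood $[x,x+\varepsilon)\subseteq O(x),$ let $[a_n,b_n)$ be the nodes of the unique branch through $x;$ since $b_n\downarrow x,$ I would choose $n\ge 1$ with $b_n\le x+\varepsilon$ and take $s$ to be this branch node, so that $x\in V_s=[a_n,b_n)$ and the children of $V_s$ are the right-accumulating pieces $[p_k,p_{k+1})$ with $p_k\uparrow b_n.$ As $p_0=a_n\le x<b_n,$ there is a least $n_0$ with $p_{n_0}\ge x,$ and then $\bigcup_{k\ge n_0}V_{s\hat{\ }k}=[p_{n_0},b_n)\subseteq[x,b_n)\subseteq[x,x+\varepsilon)\subseteq O(x),$ which is exactly (L6).

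I expect the main obstacle to be the design of the scheme itself, rather than any individual verification. The naive attempt to force each $x$ to be a left endpoint of all its branch intervals is impossible, because the countably many nodes supply only countably many endpoints whereas $\mathbb{R}$ is uncountable; thus (L6) cannot be reduced to the trivial case of $x$ being an endpoint. The right-accumulating partition circumvents this: a tail of the right-clustering children of a branch node whose right endpoint $b_n$ is already within $\varepsilon$ of $x$ automatically lies inside $[x,x+\varepsilon),$ irrespective of whether $x$ is an endpoint or an interior point of that node.
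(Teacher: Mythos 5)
Your proof is correct and follows essentially the same route as the paper: both build the Lusin $\pi\!$-base recursively by partitioning $\mathbb{R}$ into integer-length half-intervals at level one and then splitting each node $[a,b)$ into half-open pieces that accumulate at the right endpoint $b$ (the paper via increments smaller than $1/{\rm length}(s)$, you via geometric breakpoints), with the right-accumulation serving exactly your stated purpose in verifying (L6). Your write-up is in fact more detailed than the paper's, which gives only the construction and leaves the checks of (L0)--(L6) -- including your correct observation that (L4) needs the children's right endpoints to stay strictly below the parent's -- to the reader.
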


\begin{proof} We build a Lusin $\pi\!$-base $(V_s)_{s\in\mathbb{N}^{<\mathbb{N}}}$ for the Sorgenfrey line $(\mathbb{R},\tau_{\scriptscriptstyle \mathbf{S}})$ by recursion on ${\rm length}(s).$
Let $V_\varnothing:=\mathbb{R},$ and let the set $\{V_s:{\rm length}(s)=1\}$ be the set of all half-intervals of the form $[z,z+1),$ where $z$ is an integer. For ${\rm  length}(s)\geq 1,$ consider an interval $[a_s,b_s)=V_s$ and let $(x_n)$ be a sequence from $[a_s,b_s)$ such that $x_0:=a_s,$ $x_{m+1}>x_m,$ $x_{m+1}-x_m<1/{\rm length}(s),$ and $(x_n)$ converges to $b_s$ in the real line with Euclidean topology; then define $V_{s\hat{\ }n}:=[x_n,x_{n+1}).$ \end{proof}

It follows from Example \ref{pr2} and Lemma~\ref{l3.} that the existence of a strict Lusin scheme that is a base for topology is a characterization of the Baire space. The existence of a Lusin $\pi\!$-base is a weaker property; however, this property is sufficient to prove the next theorem:

\begin{teo}\label{t9}
Let $X$ be a space with a Lusin $\pi\!$-base and let $Y$ be a nonempty Polish space.
Then there exists a continuous open map $f:X\xrightarrow{\text{ onto }}Y.$
\end{teo}

\begin{sle}\label{s11.} Every nonempty Polish space is an image of the Sorgenfrey line under some continuous open map.\hfill$\Box$
\end{sle}

The next lemma gives a description of spaces with a Lusin $\pi\!$-base; we need this description to prove Theorem~\ref{t9}.

\begin{lem}\label{l10} For every space $X,$ conditions \textup{(A)} and \textup{(B)} are equivalent:
\begin{itemize}
  \item [\textup{(A)}]$X$ has a Lusin $\pi\!$-base.
  \item [\textup{(B)}]There is a topology $\tau$ on the set $\mathbb{N}^\mathbb{N}$  such that:
      \begin{itemize}
      \item [\textup{(B0)}]the space $(\mathbb{N}^\mathbb{N},\tau)$ is homeomorphic to $X;$
      \item [\textup{(B1)}]$\tau$ is finer than the topology $\tau_{\scriptscriptstyle \mathcal{N}}$ of the Baire space $(\mathbb{N}^{\mathbb{N}},\tau_{\scriptscriptstyle \mathcal{N}});$
      \item [\textup{(B2)}]the standard base $(\mathcal{N}_s)_{s\in\mathbb{N}^{<\mathbb{N}}}$ for the Baire space is a Lusin $\pi\!$-base for $(\mathbb{N}^\mathbb{N},\tau).$
      \end{itemize}
\end{itemize}
\end{lem}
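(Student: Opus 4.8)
The plan is to prove the equivalence by constructing the natural transfer of data between a Lusin $\pi\!$-base on $X$ and the standard base on $\mathbb{N}^\mathbb{N}$, using the homeomorphism furnished by Lemma~\ref{l3.} as the bridge. For the direction $\textup{(A)}\Rightarrow\textup{(B)}$, suppose $(V_s)_{s\in\mathbb{N}^{<\mathbb{N}}}$ is a Lusin $\pi\!$-base for $X$. Since a Lusin $\pi\!$-base is in particular a strict Lusin scheme (properties (L0)--(L4)), I would apply Lemma~\ref{l3.} to the subbase $\{V_s:s\in\mathbb{N}^{<\mathbb{N}}\}$: this yields a bijection $\sigma:X\to\mathbb{N}^\mathbb{N}$ with $\sigma(V_s)=\mathcal{N}_s$ for all $s$. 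The key idea is then to push the topology of $X$ forward along $\sigma$: define $\tau:=\{\sigma(U):U\text{ open in }X\}$, so that $\sigma:X\to(\mathbb{N}^\mathbb{N},\tau)$ is a homeomorphism by construction, giving (B0).

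For (B1) and (B2) I would verify that this pushed-forward topology has the required relationship to the standard base. Since each $V_s$ is open in $X$ by (L5), its image $\mathcal{N}_s=\sigma(V_s)$ is open in $(\mathbb{N}^\mathbb{N},\tau)$; as the sets $\mathcal{N}_s$ generate $\tau_{\scriptscriptstyle\mathcal{N}}$, this shows $\tau_{\scriptscriptstyle\mathcal{N}}\subseteq\tau$, which is (B1). For (B2), the point is that being a Lusin $\pi\!$-base is a property invariant under homeomorphism, stated purely in terms of the indexed family and the topology. Properties (L0)--(L4) for $(\mathcal{N}_s)$ are immediate (they are purely set-theoretic and transfer under the bijection $\sigma$, or alternatively they hold by Example~\ref{pr2}); (L5) holds because each $\mathcal{N}_s$ is $\tau$-open as just noted; and (L6) transfers because $\sigma$ carries the local condition for $(V_s)$ in $X$ verbatim to the corresponding condition for $(\mathcal{N}_s)$ in $(\mathbb{N}^\mathbb{N},\tau)$, since neighbourhoods, containments, and the combinatorial expressions $\bigcup_{n\geq n_0}V_{s\hat{\ }n}$ all map to their counterparts under the homeomorphism.

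The direction $\textup{(B)}\Rightarrow\textup{(A)}$ is the easier one: given $\tau$ as in (B), condition (B2) says precisely that $(\mathcal{N}_s)_{s\in\mathbb{N}^{<\mathbb{N}}}$ is a Lusin $\pi\!$-base for $(\mathbb{N}^\mathbb{N},\tau)$, and (B0) gives a homeomorphism $h:(\mathbb{N}^\mathbb{N},\tau)\to X$. Transporting the family by setting $V_s:=h(\mathcal{N}_s)$ produces a Lusin $\pi\!$-base for $X$, again because every defining property of a Lusin $\pi\!$-base is homeomorphism-invariant. The main obstacle, and the step deserving the most care, is the careful bookkeeping in verifying (L6) under transfer: one must check that the existential claim ``there exist $s$ and $n_0$ with $x\in V_s$ and $\bigcup_{n\geq n_0}V_{s\hat{\ }n}\subseteq O(x)$'' is genuinely preserved, using that $\sigma$ (or $h$) is a bijective homeomorphism so that it commutes with arbitrary unions, preserves the indexing of the scheme, and carries neighbourhoods to neighbourhoods in both directions. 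Once one observes that \emph{every} clause in Definitions~\ref{o1} and~\ref{opr4.} is expressible in homeomorphism-invariant terms, the equivalence reduces to this transport-of-structure argument, so the proof is essentially a matter of stating the invariance cleanly rather than performing any substantive computation.
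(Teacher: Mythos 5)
Your proposal is correct and follows essentially the same route as the paper: both take the bijection $\sigma$ from the proof of Lemma~\ref{l3.} with $\sigma(V_s)=\mathcal{N}_s$, define $\tau$ as the pushforward topology $\{\sigma(U):U\text{ open in }X\}$, and observe that the defining properties of a Lusin $\pi\!$-base transfer under this homeomorphism (the paper compresses all of this, including the reverse implication, into a few lines). Your version merely spells out the verification of (B1), (B2), and the transport argument for $(B)\Rightarrow(A)$ that the paper declares trivial.
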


\begin{sle}\label{s7.} There exists a continuous one-to-one map from the Sorgenfrey line onto the Baire space.\hfill$\Box$%
\end{sle}

\begin{proof}[{\bf Proof of Lemma~\ref{l10}}]
The implication $(B)\rightarrow (A)$ is trivial,  we prove  $(A)\rightarrow (B).$
Suppose $(V_s)_{s\in\mathbb{N}^{<\mathbb{N}}}$ is a Lusin $\pi\!$-base for  $X.$
Consider the map $\sigma:X\rightarrow\mathbb{N}^\mathbb{N}$ from the proof of Lemma~\ref{l3.}.
Since $\sigma$ is a bijection and $\sigma(V_s)=\mathcal{N}_s$ for all $s\in\mathbb{N}^{<\mathbb{N}},$ it follows that the topology $\tau:=\big\{\sigma(U):U\text{ is open in }X\big\}$ is required.
\end{proof}

\begin{proof}[{\bf Proof of Theorem~\ref{t9}}] Let $X$ be a space with a Lusin $\pi\!$-base and $Y$ a nonempty Polish space with a compatible complete metric  $d.$ We may assume that \begin{equation}\label{dxy}
d(x,y)\leq 1  \ \text{  for all }\  x, y\in Y
\end{equation}
--- see, for example,~\cite[Th.\,4.1.3]{eng}. By Lemma~\ref{l10}, there is a topology $\tau$ on the set $\mathbb{N}^\mathbb{N}$ that satisfies conditions (B0)--(B2).
From~(B0) it follows that if we construct a continuous open map $h:(\mathbb{N}^\mathbb{N},\tau)\xrightarrow{\text{ onto }}Y,$ then the theorem will be proved.

The construction of this map uses a family $(B_s)_{s\in\mathbb{N}^{<\mathbb{N}}}$ of open balls in $Y$  such that:
\begin{itemize}
  \item[(a)] $B_s\neq\varnothing$ for all $s\in\mathbb{N}^{<\mathbb{N}}.$
  \item[(b)] $B_\varnothing=Y.$
  \item[(c)] $B_s\supseteq \overline{B_{s\hat{\ }n}}$ for all $s\in\mathbb{N}^{<\mathbb{N}}$ and  $n\in\mathbb{N}.$
  \item[(d)] $B_s\subseteq\bigcup_{n\geq n_0}B_{s\hat{\ }n}$ for all $s\in\mathbb{N}^{<\mathbb{N}}$ and $n_0\in\mathbb{N}.$
  \item[(e)] ${\rm diam}(B_s)\leq 2^{-{\rm length}(s)}$ for all $s\in\mathbb{N}^{<\mathbb{N}} \setminus \{\varnothing\}.$
\end{itemize}
Here by  $\overline{B_{s\hat{\ }n}}$ and ${\rm diam}(B_s)$ we denote the closure of a set $B_{s\hat{\ }n}$ and the diameter of a set $B_s,$ respectively. Let the map $h:\mathbb{N}^\mathbb{N}\rightarrow  Y$ be such that
\begin{equation}\label{oprf}
\textstyle \{h(x)\}=\bigcap_{n}B_{x|n}.
\end{equation}
The map $h$ is well-defined by conditions (a), (c), (e) and by completeness of the metric  $d.$
First we show that $h$ is onto, continuous, and open; then we shall construct the family $(B_s)_{s\in\mathbb{N}^{<\mathbb{N}}}.$

Let us show that $h$ is surjective. For this we prove that
\begin{equation}\label{B6}
h(\mathcal{N}_s)=B_s \ \text{  for all }\ s\in\mathbb{N}^{<\mathbb{N}};
\end{equation}
then, by condition~(b), the surjectivity of $h$ is a special case of~\eqref{B6} with $s=\varnothing.$
For inclusion $h(\mathcal{N}_s)\subseteq B_s:$ suppose $x\in \mathcal{N}_s,$ i.e., $x|n=s$ for some $n\in\mathbb{N};$ then by~\eqref{oprf} we have $h(x)\in B_{x|n}=B_s.$ Now let us check $B_s\subseteq h(\mathcal{N}_s).$ Since the metric $d$ is complete, it follows from (c)--(e) that for each $y\in B_s,$ there is a sequence $m_0,m_1,\ldots$ from $\mathbb{N}$ such that $\textstyle \{y\}=\bigcap_n B_{s\hat{\ }m_0\!\hat{\ }\dots\!\hat{\ }m_n}.$ Using~\eqref{oprf}, we get
$h(s\hat{\ }m_0\!\hat{\ }m_1\!\hat{\ }\!\ldots)=y,$ whence $y\in h(\mathcal{N}_s).$

Let us show that $h$ is continuous. Consider a point $x\in\mathbb{N}^\mathbb{N}$ and a neighbourhood $U\subseteq Y$ of its image $h(x).$ From~\eqref{oprf} and~(e) it follows that there is $n\in\mathbb{N}$ such that $B_{x|n}\subseteq U.$ Since the topology $\tau$ satisfies condition~(B1) of Lemma~\ref{l10}, we see that the set $\mathcal{N}_{x|n}$ is a $\tau\!$-neighbourhood of $x$ such that, by~\eqref{B6}, $h(\mathcal{N}_{x|n})=B_{x|n}\subseteq U.$

Let us show that $h$ is open. Consider a set $V\subseteq \mathbb{N}^\mathbb{N}$ that is open in $(\mathbb{N}^\mathbb{N},\tau)$ and a point $y\in h(V).$ Suppose $x\in V$ and $y=h(x);$ then from (B2) of Lemma~\ref{l10} and (L6) of Definition~\ref{opr4.} it follows that there are $s\in\mathbb{N}^{<\mathbb{N}}$ and $n_0\in\mathbb{N}$ such that
\begin{equation}\label{lpb}
\textstyle x\in \mathcal{N}_s\quad\text{ and }\quad\bigcup_{n\geq n_0}\mathcal{N}_{s\hat{\ }n}\subseteq V.
\end{equation}
We have $y=h(x)\in h(\mathcal{N}_s),$ by~(\ref{B6}) we have $h(\mathcal{N}_s)=B_s,$ then, by~(d), we get $B_s\subseteq\bigcup_{n\geq n_0}B_{s\hat{\ }n},$ next using~(\ref{B6}) we obtain
$$\textstyle \bigcup_{n\geq n_0}B_{s\hat{\ }n}=\bigcup_{n\geq n_0}h(\mathcal{N}_{s\hat{\ }n}).$$
Further, $$\textstyle \bigcup_{n\geq n_0}h(\mathcal{N}_{s\hat{\ }n}) = h\big(\bigcup_{n\geq n_0}\mathcal{N}_{s\hat{\ }n}\big),$$ and finally, by~\eqref{lpb}, we get
$$\textstyle h\big(\bigcup_{n\geq n_0}\mathcal{N}_{s\hat{\ }n}\big)\subseteq h(V).$$
Thus we have found an open ball $B_s$ in $Y$ such that $y\in B_s\subseteq h\big(V\big).$ It follows that the map $h$ is open.

To conclude the proof it remains to build a family $(B_s)_{s\in\mathbb{N}^{<\mathbb{N}}}$ of open balls in $Y$ satisfying conditions (a)--(e); we build this family inductively. Let $B_s$ be the ball of center $y_s$ and radius $r_s.$ For $s=\varnothing$ define $r_\varnothing:=2$ and choose $y_\varnothing\in Y$ arbitrary. Now suppose that a ball $B_s\neq\varnothing$ is already constructed. First we define centers $y_{s\hat{\ }n}\in B_s$ such that the set $\{y_{s\hat{\ }n}:n\geq n_0\}$ is dense in $B_s$ for all $n_0\in\mathbb{N};$ we can find such centers since $Y$ is a separable metrizable space. Next, for $n\in\mathbb{N},$ we define
$$\textstyle r_{s\hat{\ }n} := \text{ minimum of } \Big\{\frac{r_s-d(y_s,y_{s\hat{\ }n})}{2}, \ 2^{-{\rm length}(s)-3}\Big\};$$
$r_{s\hat{\ }n}$ is positive because $y_{s\hat{\ }n}\in B_s\mathsurround=0pt$.

Let us show that conditions (a)--(e) are satisfied. Condition~(a) is trivial; condition~(b) follows from \eqref{dxy}; condition~(c) follows from the first restriction on $r_{s\hat{\ }n}$ and from the triangle
inequality; condition~(e) follows from the second restriction on $r_{s\hat{\ }n}.$ It remains to verify condition~(d).

Suppose $s\in\mathbb{N}^{<\mathbb{N}},$  $n_0\in\mathbb{N},$  and $y\in B_s;$ we must show that $y\in B_{s\hat{\ }k}$ for some $k\geq n_0.$
By construction of centers $y_{s\hat{\ }n},$ we can find $k\geq n_0$ such that
\begin{equation}\label{Nk} \textstyle
d(y,y_{s\hat{\ }k})<2^{-{\rm length}(s)-3}\quad\text{ and }\quad
d(y,y_{s\hat{\ }k})<\frac{r_s-d(y_s,y)}{4}.
\end{equation}
Recall the choice of $r_{s\hat{\ }k}.$ If $r_{s\hat{\ }k}=2^{-{\rm length}(s)-3},$ then it follows from \eqref{Nk} that $d(y,y_{s\hat{\ }k})<r_{s\hat{\ }k},$ equivalently, $y\in B_{s\hat{\ }k}.$ Now, suppose that
$r_{s\hat{\ }k}=\frac{r_s-d(y_s,y_{s\hat{\ }k})}{2};$ if we express $r_s$ from this equality and substitute it into~\eqref{Nk}, we shall get
$$d(y,y_{s\hat{\ }k})<\frac{2\cdot r_{s\hat{\ }k}+d(y_s,y_{s\hat{\ }k})-d(y_s,y)}{4}=
\frac{r_{s\hat{\ }k}+\frac{1}{2}\cdot\big(d(y_s,y_{s\hat{\ }k})-d(y_s,y)\big)}{2}.$$
Next, multiplying both sides by $2$ and carrying  $\frac{1}{2}\cdot d(y,y_{s\hat{\ }k})$ from left to right, we obtain
$${\textstyle \frac{3}{2}\cdot d(y,y_{s\hat{\ }k})<r_{s\hat{\ }k}+\frac{1}{2}\cdot\big(d(y_s,y_{s\hat{\ }k})-d(y_s,y)-d(y,y_{s\hat{\ }k})\big).}$$
Since, by the triangle inequality, the expression in brackets is negative or zero, we have $\frac{3}{2}\cdot d(y,y_{s\hat{\ }k})<r_{s\hat{\ }k},$ so
$d(y,y_{s\hat{\ }k})<r_{s\hat{\ }k}.$ This means that $y\in B_{s\hat{\ }k},$ which  completes the proof.%
\end{proof}

The next lemma is a strengthening of Corollary~\ref{s7.}; in its proof we use some ideas from paper~\cite{1} of D.\,Motorov.

\begin{lem}\label{l6.} Let  $f:(\mathbb{R},\tau_{{\scriptscriptstyle \mathbf{S}}})\rightarrow Y$ be a continuous map from the Sorgenfrey line to a space of at most countable weight. Then there exists a topology $\tau$ on the set $\mathbb{R}$ such that:%
\begin{itemize}
      \item [$\bullet\ $]The topology $\tau$ is weaker than the topology $\tau_{{\scriptscriptstyle \mathbf{S}}}$ of the Sorgenfrey line.
      \item [$\bullet\ $]The space $(\mathbb{R},\tau)$ is homeomorphic to the Baire space.
      \item [$\bullet\ $]The map $f:(\mathbb{R},\tau)\rightarrow Y$ is continuous.
\end{itemize}
\end{lem}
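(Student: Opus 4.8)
The plan is to construct, by recursion on length as in the proof of Lemma~\ref{l5.5}, a strict Lusin scheme $(V_s)_{s\in\mathbb{N}^{<\mathbb{N}}}$ on $\mathbb{R}$ all of whose members are half-open intervals $[a_s,b_s)$, hence open in the Sorgenfrey line, but arranged so that the scheme also ``resolves'' $f$. Fix a countable base $(W_k)_{k\in\mathbb{N}}$ for $Y$ and set $U_k:=f^{-1}(W_k)$; each $U_k$ is $\tau_{\scriptscriptstyle\mathbf{S}}$-open by continuity of $f$. Once $(V_s)$ is built, let $\tau$ be the topology generated by the subbase $\{V_s:s\in\mathbb{N}^{<\mathbb{N}}\}$. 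Then $(\mathbb{R},\tau)$ is homeomorphic to the Baire space by Lemma~\ref{l3.}, and $\tau$ is weaker than $\tau_{\scriptscriptstyle\mathbf{S}}$ since every $V_s$ is $\tau_{\scriptscriptstyle\mathbf{S}}$-open. By Lemma~\ref{l3.} the sets $V_s$ are clopen in $\tau$ and form a base, so, writing $V_{x|n}$ for the level-$n$ scheme interval containing a point $x$ (equivalently $V_{\sigma(x)|n}$ with $\sigma$ as in the proof of Lemma~\ref{l3.}), continuity of $f:(\mathbb{R},\tau)\to Y$ amounts to: for every $x$ and every $W_k\ni f(x)$ there is an $n$ with $f(V_{x|n})\subseteq W_k$. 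Securing this is the whole content of the lemma.

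The main obstacle is that a scheme interval $V_{x|n}=[a_n,b_n)$ shrinking to $x$ contains, in general, points to the \emph{left} of $x$, whereas Sorgenfrey continuity of $f$ only yields $f([x,x+\delta))\subseteq W_k$, i.e.\ control on a right neighbourhood. So if $x$ is approached strictly from the left and $f$ is discontinuous at $x$ in the \emph{Euclidean} sense, $f(V_{x|n})$ need not enter $W_k$. The observation that rescues the construction is that the set $D$ of points where $f$ fails to be Euclidean-continuous is \emph{countable}. Indeed $D=\bigcup_k D_k$, where $D_k=\{x\in U_k: x\text{ is a left accumulation point of }\mathbb{R}\setminus U_k\}$, and one checks that $D_k\subseteq U_k\setminus\mathrm{int}_E U_k$ (Euclidean interior): for such $x$ one has $[x,x+\varepsilon)\subseteq U_k$ for some $\varepsilon$, so $(x,x+\varepsilon)\subseteq\mathrm{int}_E U_k$ and $x\notin\mathrm{int}_E U_k$, which forces $x$ to be the left endpoint of a connected component of the Euclidean-open set $\mathrm{int}_E U_k$; as there are only countably many such components, each $D_k$, and hence $D$, is countable.

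With this in hand I would enumerate $D=\{p_0,p_1,\dots\}$ and run the recursion of Lemma~\ref{l5.5} with one extra bookkeeping rule. When splitting a node $V_s=[a_s,b_s)$ of length $n$ into consecutive intervals $[x_0,x_1),[x_1,x_2),\dots$ with $x_0=a_s$ and $x_m\nearrow b_s$, I require that every $p_i\in(a_s,b_s)$ with $i\le n$ occur among $x_1,x_2,\dots$, so that $p_i$ becomes the left endpoint of one of the children; this is possible because only finitely many points are forced at each node, and the remaining $x_m$ are chosen, exactly as in Lemma~\ref{l5.5}, so that all gaps are smaller than $1/n$, guaranteeing that diameters tend to $0$ and that condition (L4) holds. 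Since every split keeps $x_0=a_s$, once $p_i$ has become the left endpoint of a node on its branch it remains the left endpoint at all deeper levels; thus for each $p_i\in D$ eventually $V_{x|n}=[p_i,b_n)$ with $b_n\to p_i$, i.e.\ $p_i$ is approached from the right.

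Finally I would verify $\tau$-continuity of $f$ at each $x$. If $x\notin D$, then $f$ is Euclidean-continuous at $x$, so $(x-\delta,x+\delta)\subseteq U_k$ for a suitable $\delta$; since $x\in V_{x|n}=[a_n,b_n)$ with $a_n\le x$ and $a_n,b_n\to x$, eventually $[a_n,b_n)\subseteq(x-\delta,x+\delta)$, whence $f(V_{x|n})\subseteq W_k$. If $x=p_i\in D$, then eventually $V_{x|n}=[p_i,b_n)$ is a right neighbourhood of $p_i$, and Sorgenfrey continuity gives $[p_i,p_i+\delta)\subseteq U_k$ for some $\delta$, so $f(V_{x|n})\subseteq W_k$ as soon as $b_n<p_i+\delta$. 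Together with the first two paragraphs this yields all three required properties; the crux of the argument is the countability of $D$ combined with the device of forcing its points to be left endpoints of the scheme.
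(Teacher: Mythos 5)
Your proposal is correct, but it reaches the lemma by a route that differs from the paper's in its key countability input and in how continuity is verified. The paper invokes hereditary Lindel\"ofness of the Sorgenfrey line to write each preimage $f^{-1}(B_\lambda)$ of a basic open set as a countable union $\bigcup_n[c_{\lambda,n},d_{\lambda,n})$, then runs the recursion of Lemma~\ref{l5.5} demanding that all the countably many endpoints $c_{\lambda,n},d_{\lambda,n}$ occur among the \emph{right} endpoints $b_s$ of the scheme; continuity of $f:(\mathbb{R},\tau)\rightarrow Y$ is then obtained globally, by noting that each set $\{x:x<b_s\}$ is closed-open in $(\mathbb{R},\tau)$, hence each $[c_{\lambda,n},d_{\lambda,n})=\{x<d_{\lambda,n}\}\setminus\{x<c_{\lambda,n}\}$, and so each $f^{-1}(B_\lambda)$, is $\tau\!$-open. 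You instead isolate the countable set $D$ of points of Euclidean discontinuity of $f$ by an elementary component-counting argument --- in effect re-proving, with bare hands, exactly the instance of hereditary Lindel\"ofness that the paper cites, since $x\in D_k$ precisely when $x$ is the left endpoint of a component of $\mathrm{int}_E f^{-1}(W_k)$ --- then force the points of $D$ to become \emph{left} endpoints in the recursion and verify continuity pointwise along branches. The two devices are essentially dual: since the levels of the scheme partition $\mathbb{R}$, making $d$ a right endpoint $b_s$ at some level automatically makes it the left endpoint of the adjacent node at that level, so in both constructions no sufficiently deep scheme interval straddles a critical point. Both recursions need the same dovetailing bookkeeping, which your ``force $p_i$ for $i\leq n$ at nodes of length $n$'' rule handles correctly: only finitely many points are imposed per node, inserting them respects the gap bound $1/n$, children's right endpoints stay strictly below the parent's, so (L0)--(L4) survive and Lemma~\ref{l3.} applies, while $\tau\subseteq\tau_{\scriptscriptstyle\mathbf{S}}$ because every $V_s$ is a half-interval. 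What the paper's version buys is a shorter verification by citation and no case split at the continuity check; what yours buys is self-containedness plus the explicit intermediate fact, of independent interest, that a Sorgenfrey-continuous map into a second-countable space is Euclidean-continuous off a countable set.
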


\begin{proof} Let $(B_\lambda)_{\lambda\in\Lambda}$ be an at most countable base for the space $Y.$ The Sorgenfrey line is hereditarily Lindel\"{o}f~\cite[Ex.\,3.10.C(a)]{eng}, therefore for any set $f^{-1}(B_\lambda),$ which is open in $(\mathbb{R},\tau_{{\scriptscriptstyle \mathbf{S}}}),$ there exists a sequence $\big([c_{\lambda,n},d_{\lambda,n})\big)_{n\in\mathbb{N}}$ from the base
$\big\{[c,d):c,d\in\mathbb{R}, c<d\big\}$ such that
\begin{equation}\label{zzz}
f^{-1}(B_\lambda)=\bigcup_{n\in\mathbb{N}}[c_{\lambda,n},d_{\lambda,n}).
\end{equation}
Let us build a Lusin $\pi\!$-base $(V_s)_{s\in\mathbb{N}^{<\mathbb{N}}}$ for the Sorgenfrey line in the same way as we built it in the proof of Lemma~\ref{l5.5}, where for each $s\neq\varnothing,$ we had $V_s=[a_s,b_s);$ but in addition we demand the following:
\begin{equation}\label{pibs}
\{c_{\lambda,n}:\lambda\in\Lambda,n\in\mathbb{N}\}\cup \{d_{\lambda,n}:\lambda\in\Lambda,n\in\mathbb{N}\}\subseteq
\big\{b_s:s\in\mathbb{N}^{<\mathbb{N}}\setminus\{\varnothing\}\big\}.
\end{equation}

Let $\tau$ be the topology on $\mathbb{R}$ generated by the subbase
$\big\{V_s:s\in\mathbb{N}^{<\mathbb{N}}\big\};$
this topology is weaker than $\tau_{{\scriptscriptstyle \mathbf{S}}}.$ By Lemma~\ref{l3.}, $(\mathbb{R},\tau)$ is homeomorphic to the Baire space and each set $V_s=[a_s,b_s)$ is closed-open in $(\mathbb{R},\tau).$ This implies that each set $\{x\in\mathbb{R}:x<b_s\}$ is also closed-open in $(\mathbb{R},\tau),$ whence using (\ref{pibs}) we see that every set $[c_{\lambda,n},d_{\lambda,n})$ is open in $(\mathbb{R},\tau).$
It now follows from (\ref{zzz}) that the set $f^{-1}(B_\lambda)$ is open in $(\mathbb{R},\tau)$ for all $\lambda\in\Lambda,$ hence the map $f:(\mathbb{R},\tau) \rightarrow Y$ is continuous.\end{proof}

\begin{sle}\label{s8.}
Let $Y$ be a metrizable space. Then:
\begin{itemize}
\item[\textup{(i)}]$Y$ is an image of the Sorgenfrey line under some continuous map iff $Y$ is a nonempty analytic space.
\item[\textup{(ii)}]$Y$ is an image of the Sorgenfrey line under some continuous one-to-one map iff $Y$ is a nonempty absolute Borel space in which every nonempty open subset is uncountable.
\item[\textup{(iii)}]$Y$ is an image of the Sorgenfrey line under some continuous quotient \textup{(}biquotient, pseudo-open\textup{)} map iff $Y$ is a nonempty analytic space.
\end{itemize}
\end{sle}

\begin{proof} Suppose $Y$ is a metrizable space. Since the Sorgenfrey line is separable, its metrizable images have at most countable weight. Therefore it follows from Corollary~\ref{s7.} and Lemma~\ref{l6.} that $Y$ is an image of the Sorgenfrey line under some continuous (continuous one-to-one) map iff $Y$ is an image of the Baire space under some continuous (continuous one-to-one) map.

Part~(i) of the corollary now follows from the fact that $Y$ is a continuous image of the Baire space iff $Y$ is a nonempty analytic space~\cite[Df.\,14.1 and Th.\,7.9]{kech}. Part~(ii) follows from the analogous description of metrizable continuous one-to-one images of the Baire space. One direction of this description follows from the fact that in the class of Polish spaces a continuous one-to-one image of a Borel set is Borel~\cite[Th.\,15.1]{kech} and from the fact that one-to-one maps preserve cardinality. Another direction  was proved by W.\,Sierpinski~\cite[Footnote\,1 on p.\,447]{kur1} (his result is about the space of irrationals, but the space of irrationals is homeomorphic to the Baire space~\cite[Ex.\,3.4]{kech}).

It remains to prove the implication from right to left in part~(iii). Suppose $Y$ is a nonempty analytic space; then, as we mentioned above, $Y$ is a continuous image of the Baire space. E.~Michael and A.~Stone proved in~\cite{MS} (this result was not included in the formulation of a theorem, but was actually proved on p.\,631) that in this case $Y$ is a continuous biquotient image (and hence is a pseudo-open image and is a quotient image~\cite[Ch.\,6, Pr.\,13 and 14]{arh-pon}) of the Baire space. On the other hand Corollary~\ref{s11.} says that the Baire space is a continuous open image of the Sorgenfrey line. It can easily be verified that a composition of a continuous open map and a continuous biquotient (pseudo-open, quotient) map is again a continuous biquotient (pseudo-open, quotient, respectively) map. This implies that $Y$ is a continuous biquotient (pseudo-open, quotient) image of the Sorgenfrey line.
\end{proof}

\section{Open maps and Choquet games}
\label{subsec3}
Now in order to study open maps from the Sorgenfrey line to metrizable spaces we consider the notions of Choquet game and strong Choquet game. The \emph{Choquet game} on a nonempty space $X$ is defined as follows: two players, I and II, alternately choose nonempty open sets\medskip

I $\ \ \ \ \ \ U_0\qquad\ \ \ \ U_1\qquad\qquad\ldots$

II $\qquad\ \ \ \ \ V_0\qquad\ \ \ \ \  V_1\qquad\qquad\ldots$\medskip\\
such that $U_0\supseteq V_0\supseteq U_1\supseteq V_1 \ldots\ .$ Player~II wins the run $(U_0,V_0,\ldots)$ of Choquet game on $X$ iff $\bigcap_{n}V_n\neq\varnothing;$ otherwise player~I wins this run.
A nonempty space $X$ is called a \emph{Choquet space} iff player~II has a winning strategy in the Choquet game on $X.$
The \emph{strong Choquet game} on a nonempty space~$X$ is defined in the same way, except that the $n\!$-th move of player~I is a pair $(U_n,x_n),$ where $U_n\subseteq V_{n-1}$ is open and $x_n\in U_n,$ and the $n\!$-th move of player~II is an open $V_n\subseteq U_n$ such that $x_n\in V_n.$
A nonempty space $X$ is called a \emph{strong Choquet space} iff player II has a winning strategy in the strong Choquet game on $X.$ More precise definitions of this notions can be found in~\cite{kech}.

It is easy to verify that every space with a Lusin $\pi\!$-base is a Choquet space. On the other hand, there is a separable metrizable space with a Lusin $\pi\!$-base that is not strong Choquet~\cite{MP2}. Nevertheless, the following holds:

\begin{lem}\label{l13.} The Sorgenfrey line is a strong Choquet space.
\end{lem}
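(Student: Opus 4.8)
The plan is to exhibit an explicit winning strategy for player~II in the strong Choquet game on $(\mathbb{R},\tau_{\scriptscriptstyle \mathbf{S}})$, making essential use of the fact that in the Sorgenfrey line every basic half-open interval $[a,b)$ is clopen. Throughout a run, player~II will answer each move $(U_n,x_n)$ of player~I by a half-open interval $V_n=[x_n,c_n)$. This is always possible: since $U_n$ is $\tau_{\scriptscriptstyle \mathbf{S}}$-open and contains $x_n,$ there is $b_n>x_n$ with $[x_n,b_n)\subseteq U_n,$ and it then suffices to pick $c_n$ with $x_n<c_n\le b_n,$ so that $x_n\in V_n=[x_n,c_n)\subseteq U_n.$

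I would then record the two monotonicity consequences of the rules of the game. Because $x_{n}\in U_{n}\subseteq V_{n-1}=[x_{n-1},c_{n-1})$ for $n\ge1,$ the left endpoints satisfy $x_{n-1}\le x_n<c_{n-1},$ so the sequence $(x_n)$ is nondecreasing and bounded above by $c_0;$ hence $x^\ast:=\sup_n x_n$ exists in $\mathbb{R}.$ The natural candidate witnessing $\bigcap_n V_n\neq\varnothing$ is this point $x^\ast.$ Since $x_m\le x^\ast$ for every $m,$ membership $x^\ast\in V_m=[x_m,c_m)$ reduces to the single inequality $x^\ast<c_m.$

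The main obstacle is exactly this strictness. A naive strategy that merely shrinks the widths $c_n-x_n$ to $0$ fails: player~I can push the points $x_n$ upward so that $x^\ast$ coincides with some right endpoint $c_m,$ and then $x^\ast\notin[x_m,c_m),$ leaving the intersection empty. The remedy is to have player~II keep the right endpoints strictly decreasing, $c_n<c_{n-1};$ this is always achievable because $x_n<c_{n-1}$ leaves room, for example by setting $c_n:=\min\{b_n,\tfrac{1}{2}(x_n+c_{n-1})\}.$ With this requirement in force I would argue that for every fixed $m$ one has $x_k<c_{m+1}$ for all $k$ (for $k\le m+1$ because $x_k\le x_{m+1}<c_{m+1},$ and for $k\ge m+2$ because $x_k<c_{k-1}\le c_{m+1}$), whence $x^\ast\le c_{m+1}<c_m.$ Thus $x^\ast\in V_m$ for every $m,$ so $x^\ast\in\bigcap_n V_n$ and player~II wins every run played according to this strategy. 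The only genuinely delicate point, and the one I would check most carefully, is this passage from the always-available weak inequality $x^\ast\le c_m$ to the strict inequality $x^\ast<c_m,$ which is precisely what the strict descent of the right endpoints buys.
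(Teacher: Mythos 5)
Your proof is correct and is essentially the paper's strategy: player~II answers each move $(U_n,x_n)$ with a half-open interval $[x_n,c_n)\subseteq U_n$ that leaves strict room at the right endpoint. The only difference is in the verification of winning: the paper wedges closed segments into the chain, $U_n\supseteq[x_n,z_n]\supseteq V_n,$ and invokes compactness of nested segments, whereas you enforce $c_n<c_{n-1}$ explicitly and exhibit the witness $\sup_n x_n$ directly --- but this is the same mechanism, since the paper's containment $[x_{n+1},z_{n+1}]\subseteq V_n=[x_n,z_n)$ likewise forces the right endpoints to decrease strictly.
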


\begin{proof} We must build a winning strategy for II in the strong Choquet game on the Sorgenfrey line.
Suppose the $n\!$-th move of I is $(U_n, x_n),$ where $x_n\in U_n.$ There are  $y_n$ and $z_n$ such that $[x_n,y_n)\subseteq U_n$ and $z_n\in[x_n,y_n).$ Let us tell player~II to play $V_n:=[x_n,z_n)$ in his $n\!$-th move. We have
$$U_0\supseteq[x_0,z_0]\supseteq V_0\supseteq U_1\supseteq[x_1,z_1]\supseteq V_1\ldots,$$
therefore
$$\textstyle \bigcap_{n}V_n\supseteq\bigcap_{n}[x_{n+1},z_{n+1}]\neq\varnothing,$$
hence this strategy is winning for player~II.\end{proof}

\begin{sle}\label{s14.} Let $Y$ be a metrizable space. Then $Y$ is an image of the Sorgenfrey line under some continuous open map iff $Y$ is a nonempty Polish space.
\end{sle}

\begin{proof} The implication from right to left is proved in Corollary~\ref{s11.}. The implication from left to right follows from the facts that a continuous open image of a strong Choquet space is strong Choquet~\cite[Ex.\,8.16]{kech}, that a continuous image of a separable space is separable, and that every separable metrizable strong Choquet space is Polish~\cite[Th.\,8.17]{kech}.
\end{proof}

If a Choquet space is metrizable, then player~II has a strategy such that the set $\bigcap_{n}V_n$ is always a singleton. We shall use such strategy to show that there is no continuous open countable-to-one map from the Sorgenfrey line onto a metrizable space.

\begin{opr}\label{opr15.} The \emph{strict Choquet game} on a nonempty space~$X$ is defined in the same way as Choquet game, except that player~II wins the run $(U_0,V_0,\ldots)$ iff the set $\bigcap_{n}V_n$ is a singleton. A nonempty space $X$ is called a \emph{strict Choquet space} iff player II has a winning strategy in the strict Choquet game on $X.$
\end{opr}

\begin{teo}\label{t16.} Let $X$ be a nonempty Hausdorff Choquet space, $Y$ a strict Choquet space, and $f:X\rightarrow Y$ a continuous open map. Then at least one of the following conditions holds:
\begin{itemize}
  \item[\textup{(i)}] There exists a nonempty open set $U\subseteq X$ such that the restriction $f\upharpoonright U:U\rightarrow f(U)$ is a homeomorphism.
  \item[\textup{(ii)}] The preimage $f^{-1}(y)$ of some point $y\in Y$ has cardinality $\geq 2^{\aleph_0}.$
\end{itemize}
\end{teo}

\begin{proof} Suppose that condition~(i) does not hold; we must prove that condition~(ii) holds. To do this we shall build families $(W_s)_{s\in\{0,1\}^{<\mathbb{N}}},$ $(U_s)_{s\in\{0,1\}^{<\mathbb{N}}},$ and $(V_s)_{s\in\{0,1\}^{<\mathbb{N}}}$ of subsets of $X$ and sequences
$(\widetilde{W}_n),$ $(\widetilde{U}_n),$ and $(\widetilde{V}_n)$ of subsets of $Y$ such that the following conditions hold:
\begin{itemize}
\item[(a)] $\mathsurround=0pt W_{s}, U_{s}, V_s$ are nonempty open subsets of $X$ for all $s\in\{0,1\}^{<\mathbb{N}}.$
\item[($\mathsurround=0pt {\rm \tilde{a}}$)] $\mathsurround=0pt \widetilde{W}_n, \widetilde{U}_n, \widetilde{V}_n$ are nonempty open subsets of $Y$ for all $n\in\mathbb{N}.$
\item[(b)] $\mathsurround=0pt W_{s}\supseteq U_{s}\supseteq V_s\supseteq W_{s\hat{\ }k}\ $ for all $s\in\{0,1\}^{<\mathbb{N}}$ and $k\in\{0,1\}.$
\item[($\mathsurround=0pt {\rm \tilde{b}}$)] $\mathsurround=0pt \widetilde{W}_n\supseteq \widetilde{U}_n\supseteq \widetilde{V}_n\supseteq \widetilde{W}_{n+1}\ $ for all $n\in\mathbb{N}.$
\item[(c)] For each $\sigma\in\{0,1\}^\mathbb{N},$
the sequence $(U_{\sigma|0},V_{\sigma|0},U_{\sigma|1},V_{\sigma|1},\ldots)$ is a run of Choquet game on $X$ in which player~II plays according to some (fixed) winning strategy.
\item[($\mathsurround=0pt {\rm \tilde{c}}$)] The sequence $(\widetilde{U}_0,\widetilde{V}_0,\widetilde{U}_1,\widetilde{V}_1,\ldots)$ is a run of strict Choquet game on $Y$ in which player~II plays according to some (fixed) winning strategy.
\item[(d)] The family $(W_s)_{s\in\{0,1\}^n}$ covers the set $\widetilde{W}_n$ for all $n\in\mathbb{N},$

\medskip
where we say ``a family $(P_\lambda)_{\lambda\in \Lambda}$ \emph{covers} a set $Q$'' iff the following holds:
\begin{itemize}
\item[$\circ$] $\mathsurround=0pt P_\lambda$ is a nonempty open subset of $X$ for all $\lambda\in\Lambda;$
\item[$\circ$] $\mathsurround=0pt Q$ is a nonempty open subset of $Y;$
\item[$\circ$] the family $(P_\lambda)_{\lambda\in \Lambda}$ is disjoint;
\item[$\circ$] $\mathsurround=0pt f(P_\lambda)=Q$ for all $\lambda\in\Lambda.$
\end{itemize}
\end{itemize}

Let us show that condition~(ii) of the theorem follows from (a)--(d) and ($\mathsurround=0pt {\rm \tilde{a}}$)--($\mathsurround=0pt {\rm \tilde{c}}$).
Using ($\mathsurround=0pt {\rm \tilde{c}}$), we can define the desired point $y\in Y$ by the formula   $\{y\}=\bigcap_{n}\widetilde{V}_n.$
It follows from ($\mathsurround=0pt {\rm \tilde{b}}$) that $\bigcap_{n}\widetilde{W}_n=\bigcap_{n}\widetilde{V}_n \,(=\{y\}).$
We can use (d) to show that $f(\bigcap_{n}W_{\sigma|n})\subseteq\bigcap_{n}\widetilde{W}_n \,(=\{y\})$
for all $\sigma\in\{0,1\}^\mathbb{N};$ therefore,
$$ f^{-1}(y)\supseteq \bigcup_{\sigma\in\{0,1\}^\mathbb{N}}\Big(\bigcap_{n\in\mathbb{N}}W_{\sigma|n}\Big).$$
Condition~(b) implies that $\bigcap_{n}W_{\sigma|n}=\bigcap_{n}V_{\sigma|n},$
condition~(c) implies that every set $\bigcap_{n}V_{\sigma|n}$ is not empty, thus every set $\bigcap_{n}W_{\sigma|n}$ is not empty.
Using~(d), it is easy to prove that the family
$\textstyle(\bigcap_{n}W_{\sigma|n})_{ \sigma\in\{0,1\}^\mathbb{N}}$ is disjoint.
This means that $|f^{-1}(y)|\geq |\{0,1\}^\mathbb{N}|=2^{\aleph_0},$ that is condition~(ii) of the theorem holds.

To complete the proof, it remains to build the sets $W_s, U_s, V_s, \widetilde{W}_n, \widetilde{U}_n, \widetilde{V}_n.$
Before doing this, let us prove that for any set $Q\subseteq Y$ and any finite family $(P_0,\ldots,P_m)$ that covers $Q,$ the following four statements hold:
\begin{itemize}
  \item[(p)] There exist a set $Q'\subseteq Q$ and a family $(P'_{0},P''_{0},P'_1,P'_2,\ldots,P'_m)$ that covers $Q'$ such that $P'_{0},P''_{0}\subseteq P_0,\ $ $P'_1\subseteq P_1,$ $P'_2\subseteq P_2,$ $\ldots,$ $P'_m\subseteq P_m.$
  \item[(q)] There exist a set $Q'\subseteq Q$  and a family
$(P'_0,P''_0,P'_1,P''_1,\ldots,P'_m,P''_m)$ that covers~$Q'$ such that
$P'_0,P''_0\subseteq P_0,\ $ $\ldots,$ $P'_m,P''_m\subseteq P_m.$
  \item[(r)] For any $k\in\{0,\ldots,m\}$  and any nonempty open (in $X\!$) set $R\subseteq P_k,$ there exist a set $Q'\subseteq Q$ and a family
$(P'_0,\ldots,P'_m)$ that covers $Q'$ such that
$P'_0\subseteq P_0,$ $\ldots,$ $P'_m\subseteq P_m$ and $P'_k\subseteq R.$
  \item[(s)] For any nonempty open (in $Y\!$) set $S\subseteq Q,$
there exist a set $Q'\subseteq S$  and a family
$(P'_0,\ldots,P'_m)$ that covers $Q'$ such that
$P'_0\subseteq P_0,$ $\ldots,$ $P'_m\subseteq P_m.$
\end{itemize}

Let us check that statement~(p) holds. Since every continuous open one-to-one map is a ho\-meo\-mor\-phism and since condition~(i) of Theorem~\ref{t16.} does not hold, it follows that the nonempty open set $P_0$ contains two different points $p',p''\in P_0$ such that $f(p')=f(p'').$ Let $O_{p'}, O_{p''}\subseteq P_0$ be disjoint open neighbourhoods of  $p'$ and $p''.$
Let
$$Q':=f(O_{p'})\cap f(O_{p''}),\quad P'_{0}:=f^{-1}(Q')\cap O_{p'},\quad P''_{0}:=f^{-1}(Q')\cap O_{p''},\quad \text{and} \quad P'_i:=f^{-1}(Q')\cap P_i$$
for $i\in\{1,\ldots,m\}.$ It is easy to verify that the sets $P'_{0},P''_{0},P'_1,\ldots,P'_m,$ and $Q'$ satisfies~(p). The statements (r) and~(s) can be proved by similar arguments. To prove statement~(q) it is enough to apply statement~(p) $m+1$ times.

Now we can construct the sets $W_s, U_s, V_s, \widetilde{W}_n, \widetilde{U}_n, \widetilde{V}_n$ such that (a)--(d) and ($\mathsurround=0pt {\rm \tilde{a}}$)--($\mathsurround=0pt {\rm \tilde{c}}$) hold; we build them by recursion on $n={\rm length}(s).$
If $n=0$ (that is, $\mathsurround=0pt s=\varnothing$), let  $W_\varnothing:=X$ and $\widetilde{W}_0:=f(X).$ Note that (d) holds for $n=0,$ since $\{0,1\}^0=\{\varnothing\}.$ Fix a winning strategy for player~II in the Choquet game on $X$ and a winning strategy for player~II in the strict Choquet game on $Y.$
Suppose we have constructed $W_s$ for ${\rm length}(s)\leq n,$ $U_s$ and  $V_s$ for ${\rm length}(s)<n,$ $\widetilde{W}_k$ for $k\leq n,$ and $\widetilde{U}_k$ and $\widetilde{V}_k$ for $k<n.$ Let  $\{0,1\}^n=\{s_0,\ldots,s_m\},$ where all $s_i$ are different. First we build $U_{s_0},V_{s_0},$$\ldots,$$U_{s_m},V_{s_m},$ next $\widetilde{U}_n$ and $\widetilde{V}_n,$ and finally $W_s$ for $s\in\{0,1\}^{n+1}$ and $\widetilde{W}_{n+1}.$

Let $U_{s_0}:=W_{s_0}$ and define  $V_{s_0}$ to be the set that II plays according to his fixed winning strategy in answer to
$$(U_{s_0\!|0},V_{s_0\!|0},\quad\ldots,\quad U_{s_0|(n-1)},V_{s_0|(n-1)},U_{s_0}).$$
Apply (r) to the family $(W_{s_0},\ldots,W_{s_m}),$ which covers $\widetilde{W}_n:$ for $k=0$ and
the nonempty open $V_{s_0}\subseteq W_{s_0},$ there exist a set $\widetilde{A}_n^{(0)}\subseteq \widetilde{W}_n$ and a family $\big(A_{s_0}^{(0)},\ldots,A_{s_m}^{(0)}\big)$ that covers $\widetilde{A}_n^{(0)}$ such that
$$A_{s_0}^{(0)}\subseteq W_{s_0},\quad \ldots, \quad A_{s_m}^{(0)}\subseteq W_{s_m}\quad
\text{ and }\quad A_{s_0}^{(0)}\subseteq V_{s_0}.$$

Let $U_{s_1}:=A_{s_1}^{(0)}$ and define  $V_{s_1}$ to be the set that II plays according to his fixed winning strategy in answer to
$$(U_{s_1\!|0},V_{s_1\!|0},\quad\ldots,\quad U_{s_1\!|(n-1)},V_{s_1\!|(n-1)},U_{s_1}).$$
Apply (r) to the family $\big(A_{s_0}^{(0)},\ldots,A_{s_m}^{(0)}\big),$ which covers $\widetilde{A}_n^{(0)}:$ for $k=1$ and
the nonempty open $V_{s_1}\subseteq A_{s_1}^{(0)},$ there exist a set $\widetilde{A}_n^{(1)}\subseteq \widetilde{A}_n^{(0)}$ and a family $\big(A_{s_0}^{(1)},\ldots,A_{s_m}^{(1)}\big)$ that covers $\widetilde{A}_n^{(1)}$ such that
$$A_{s_0}^{(1)}\subseteq A_{s_0}^{(0)},\quad \ldots, \quad A_{s_m}^{(1)}\subseteq A_{s_m}^{(0)}\quad
\text{ and }\quad A_{s_1}^{(1)}\subseteq V_{s_1}.$$

Repeat this process until we get a set $\widetilde{A}_n^{(m)}\subseteq \widetilde{A}_n^{(m-1)}$ and a family $\big(A_{s_0}^{(m)},\ldots,A_{s_m}^{(m)}\big)$ that covers $\widetilde{A}_n^{(m)}$  such that
$$A_{s_0}^{(m)}\subseteq A_{s_0}^{(m-1)},\quad \ldots,\quad A_{s_m}^{(m)}\subseteq A_{s_m}^{(m-1)}\quad
\text{ and }\quad A_{s_m}^{(m)}\subseteq V_{s_m}.$$

Let $\widetilde{U}_{n}:=\widetilde{A}_n^{(m)}$ and define  $\widetilde{V}_{n}$ to be the set that II plays according to his fixed winning strategy (in the strict Choquet game on $Y$) in answer to $(\widetilde{U}_0,\widetilde{V}_0,\ldots,\widetilde{U}_{n}).$
Apply (s) to the family $\big(A_{s_0}^{(m)},\ldots,A_{s_m}^{(m)}\big),$ which covers
$\widetilde{A}_n^{(m)}:$ for the nonempty open $\widetilde{V}_{n}\subseteq \widetilde{A}_n^{(m)},$ there exist a set $\widetilde{B}_n\subseteq \widetilde{V}_{n}$ and a family $(B_{s_0},\ldots,B_{s_m})$ that covers $\widetilde{B}_n$ such that
$$B_{s_0}\subseteq A_{s_0}^{(m)},\quad \ldots, \quad B_{s_m}\subseteq A_{s_m}^{(m)}.$$
Apply (q) to the family $(B_{s_0},\ldots,B_{s_m}),$ which covers $\widetilde{B}_n:$ there exist a set $\widetilde{W}_{n+1}\subseteq \widetilde{B}_n$ and a family
$(W_{s_0\!\hat{\ }0},W_{s_0\!\hat{\ }1},\ldots,W_{s_m\!\hat{\ }0},W_{s_m\!\hat{\ }1})=(W_s)_{s\in\{0,1\}^{n+1}}$
that covers $\widetilde{W}_{n+1}$ such that
$$W_{s_0\!\hat{\ }0},W_{s_0\!\hat{\ }1}\subseteq B_{s_0},\quad\ldots,\quad
W_{s_m\!\hat{\ }0},W_{s_m\!\hat{\ }1}\subseteq B_{s_m}.$$

It is not hard to check that the constructed sets satisfy conditions~(a)--(d) and ($\mathsurround=0pt {\rm \tilde{a}}$)--($\mathsurround=0pt {\rm \tilde{c}}$). This concludes the proof.
\end{proof}

\begin{sle}\label{s17.} Let $f:\mathbf{S}\rightarrow Y$ be a continuous open map from  the Sorgenfrey line onto a metrizable space $Y.$ Then the preimage $f^{-1}(y)$ of some point $y\in Y$ has cardinality~$2^{\aleph_0}.$
\end{sle}

\begin{proof}
By Lemma~\ref{l13.}, the Sorgenfrey line is a strong Choquet space. The space $Y$ is a continuous open image of $\mathbf{S},$ thus $Y$ is also a strong Choquet space~\cite[Ex.\,8.16]{kech}; therefore both Sorgenfrey line and $Y$ are Choquet spaces.

The space $Y,$ being metrizable Choquet space, is a strict Choquet space; a winning strategy for player~II can be build as follows. Fix a winning strategy for~II in the Choquet game on $Y.$  Suppose I plays $U_n$ in his $n\!$-th move. Let $U'_n\subseteq U_n$ be any nonempty open set of diameter less than $1/n.$ To win (in the strict Choquet game) II must play the set that the winning strategy in the Choquet game tells him to choose in case I played $U'_n$ instead of $U_n.$

Now can use Theorem~\ref{t16.}. Every nonempty open subset of the Sorgenfrey line contains a copy of $\mathbf{S},$ which is not metrizable (since $\mathbf{S}$ is a separable space of uncountable weight~\cite[Ch.\,2, Pr.\,104]{arh-pon}). This implies that condition~(i) of Theorem~\ref{t16.} does not hold.
\end{proof}

\section{Closed maps and scattered spaces}
\label{subsec4}

We now turn to study closed maps from the Sorgenfrey line to metric spaces, and we shall deal with scattered spaces. Let us recall some terminology. The space $X$ is called \emph{scattered} iff every nonempty subspace of $X$ contains an isolated point. By ${\rm \bf I}(A)$ we denote the set of isolated points of a subspace~$A.$ Let $X$ be a space and $\alpha$ an ordinal. The
\emph{$\alpha\!$-th Cantor--Bendixson level} of $X,$ ${\rm \bf I}_\alpha(X),$ is defined by recursion on $\alpha:$
$${\textstyle {\rm \bf I}_\alpha(X):={\rm \bf I}\Big(X\setminus\bigcup\big\{{\rm \bf I}_\beta(X):\beta<\alpha\big\}\Big).}$$
In particular, the $0$-th Cantor--Bendixson level of $X$ is the set of isolated points of $X.$ Since the family of nonempty Cantor--Bendixson levels of $X$ is disjoint, there is the first ordinal~$\alpha$ such that ${\rm \bf I}_\alpha(X)$ is empty; this ordinal $\alpha,$ denoted by ${\rm ht}(X),$ is called the \emph{Cantor--Bendixson height} of $X.$
If a space $X$ is scattered, then the family of Cantor--Bendixson levels below ${\rm ht}(X)$ is a partition of $X,$ and for each  $x\in X,$ there is a unique $\alpha$ such that $x\in {\rm \bf I}_\alpha(X);$ we call this ordinal $\alpha$ the \emph{Cantor--Bendixson height} of $x$ in $X$ and denote by ${\rm ht}(x,X).$

\begin{lem}\label{l18.}
Let $X$ be a scattered space.
\begin{itemize}
  \item[\textup{(i)}] If $A\subseteq X$ and $x\in A,\ $ then $\ {\rm ht}(x,A)\leq {\rm ht}(x,X).$
  \item[\textup{(ii)}] If $A\subseteq X,\ $ then $\ {\rm ht}(A)\leq {\rm ht}(X).$
  \item[\textup{(iii)}] Each point $x\in X$ has a neighbourhood $O(x)$ such that
$\ {\rm ht}\big(O(x)\setminus\{x\}\big)\leq {\rm ht}(x,X).$
\end{itemize}
\end{lem}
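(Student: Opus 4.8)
The plan is to reduce all three parts to a single inclusion proved by transfinite induction. For a space $Z$ and an ordinal $\alpha$ I would write $Z^{(\alpha)}:=Z\setminus\bigcup_{\beta<\alpha}{\rm \bf I}_\beta(Z)$ for the $\alpha\!$-th Cantor--Bendixson derivative of $Z$, so that, straight from the definitions, ${\rm \bf I}_\alpha(Z)={\rm \bf I}(Z^{(\alpha)})$, $Z^{(\alpha+1)}=Z^{(\alpha)}\setminus{\rm \bf I}(Z^{(\alpha)})$, and $Z^{(\lambda)}=\bigcap_{\alpha<\lambda}Z^{(\alpha)}$ for limit $\lambda$. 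I would record two dictionary facts: $x\in Z^{(\alpha)}$ iff ${\rm ht}(x,Z)\geq\alpha$, and (using that $Z$ is scattered, so that a nonempty $Z^{(\alpha)}$ must have an isolated point) $Z^{(\alpha)}=\varnothing$ iff ${\rm \bf I}_\alpha(Z)=\varnothing$, whence ${\rm ht}(Z)$ is the least $\alpha$ with $Z^{(\alpha)}=\varnothing$. Since subspaces of scattered spaces are scattered, every height appearing below is well-defined. The key claim I would establish is
$$A^{(\alpha)}\subseteq X^{(\alpha)}\quad\text{for every }A\subseteq X\text{ and every ordinal }\alpha.$$

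I would prove this claim by transfinite induction on $\alpha$. The base case $\alpha=0$ is just $A\subseteq X$, and the limit case is immediate from $Z^{(\lambda)}=\bigcap_{\alpha<\lambda}Z^{(\alpha)}$ together with the induction hypothesis. The successor step is the heart of the argument: assuming $A^{(\alpha)}\subseteq X^{(\alpha)}$, take $x\in A^{(\alpha+1)}=A^{(\alpha)}\setminus{\rm \bf I}(A^{(\alpha)})$. Then $x\in X^{(\alpha)}$ by the hypothesis, and it remains to check that $x$ is not isolated in $X^{(\alpha)}$. If it were, some open $U\subseteq X$ would satisfy $U\cap X^{(\alpha)}=\{x\}$; but then $\{x\}\subseteq U\cap A^{(\alpha)}\subseteq U\cap X^{(\alpha)}=\{x\}$ forces $U\cap A^{(\alpha)}=\{x\}$, so $x$ would be isolated in $A^{(\alpha)}$, contradicting $x\in A^{(\alpha+1)}$. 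Hence $x\in X^{(\alpha)}\setminus{\rm \bf I}(X^{(\alpha)})=X^{(\alpha+1)}$, completing the induction.

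Granting the claim, the three parts fall out quickly. For (i), if ${\rm ht}(x,A)=\alpha$ then $x\in{\rm \bf I}_\alpha(A)\subseteq A^{(\alpha)}\subseteq X^{(\alpha)}$, and $x\in X^{(\alpha)}$ means exactly ${\rm ht}(x,X)\geq\alpha$. For (ii), set $\gamma:={\rm ht}(X)$; then $X^{(\gamma)}=\varnothing$, so $A^{(\gamma)}\subseteq X^{(\gamma)}=\varnothing$, which gives ${\rm \bf I}_\gamma(A)=\varnothing$ and therefore ${\rm ht}(A)\leq\gamma$. For (iii), set $\alpha:={\rm ht}(x,X)$, so that $x\in{\rm \bf I}_\alpha(X)={\rm \bf I}(X^{(\alpha)})$, i.e.\ $x$ is isolated in $X^{(\alpha)}$; choose an open neighbourhood $O(x)$ of $x$ with $O(x)\cap X^{(\alpha)}=\{x\}$ and put $B:=O(x)\setminus\{x\}$, so $B\cap X^{(\alpha)}=\varnothing$. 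Applying the claim to $B\subseteq X$ yields $B^{(\alpha)}\subseteq B\cap X^{(\alpha)}=\varnothing$, hence ${\rm ht}\big(O(x)\setminus\{x\}\big)={\rm ht}(B)\leq\alpha={\rm ht}(x,X)$.

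I expect the main obstacle to be precisely the successor step of the induction, which rests on the elementary but essential observation that, once $A^{(\alpha)}\subseteq X^{(\alpha)}$ is known, a point of $A^{(\alpha)}$ that is non-isolated in the smaller subspace $A^{(\alpha)}$ is automatically non-isolated in the larger subspace $X^{(\alpha)}$. The base and limit cases of the induction, the verification of the two dictionary facts, and the three deductions are all routine once this inclusion is in hand.
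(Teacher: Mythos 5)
Your proof is correct, but it is organized differently from the paper's. You isolate one global statement --- monotonicity of the Cantor--Bendixson derivative, $A^{(\alpha)}\subseteq X^{(\alpha)}$ --- and prove it by transfinite induction on the ordinal index $\alpha$, with an explicit limit case and a successor step resting on the observation that a point of $A^{(\alpha)}$ isolated in the larger set $X^{(\alpha)}$ would be isolated in the smaller set $A^{(\alpha)}$; all three parts then fall out uniformly through your dictionary between derivatives and heights. The paper instead proves part (i) directly, by induction on the ordinal $\alpha={\rm ht}(x,X)$ attached to the point $x$: it takes a neighbourhood $O(x)$ with $O(x)\setminus\{x\}\subseteq\bigcup\big\{{\rm \bf I}_\beta(X):\beta<\alpha\big\},$ applies the inductive hypothesis pointwise to the points of $\big(O(x)\cap A\big)\setminus\{x\},$ and concludes via the dichotomy ${\rm ht}(x,A)<\alpha$ or ${\rm ht}(x,A)=\alpha$; parts (ii) and (iii) are then derived from (i), with (iii) again proved by a pointwise application of (i) inside the chosen neighbourhood. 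The mathematical content is the same --- your key claim is exactly part (i) restated through the equivalence $x\in Z^{(\alpha)}\Leftrightarrow{\rm ht}(x,Z)\geq\alpha$ --- but your route buys a cleaner logical structure: a single induction whose limit case is handled transparently by the intersection formula $Z^{(\lambda)}=\bigcap_{\alpha<\lambda}Z^{(\alpha)},$ and a slicker (iii), obtained by applying the inclusion to the whole set $B=O(x)\setminus\{x\}$ at once (giving $B^{(\alpha)}\subseteq B\cap X^{(\alpha)}=\varnothing$) rather than point by point. The cost is the setup of the derivative dictionary, in particular the remark that scatteredness is what makes $Z^{(\alpha)}=\varnothing$ equivalent to ${\rm \bf I}_\alpha(Z)=\varnothing$ --- a point you correctly flag, and which the paper's level-based formulation never needs to make explicit.
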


\begin{proof} (i) We $A\subseteq X$ and prove the inequality by induction on $\alpha={\rm ht}(x,X).$ Suppose that for any $y\in A$ such that ${\rm ht}(y,X)<\alpha,$ we have $\ {\rm ht}(y,A)\leq {\rm ht}(y,X).$
Let $x\in A$ be such that ${\rm ht}(x,X)=\alpha,$ i.e., $x\in {\rm \bf I}_\alpha(X).$ This means that there is a neighbourhood $O(x)$ of $x$ such that
\begin{equation}\label{Oxx}
{\textstyle O(x)\setminus\{x\}\subseteq\bigcup\big\{{\rm \bf I}_\beta(X):\beta<\alpha\big\}.}
\end{equation}
Consider the neighbourhood $U(x):=O(x)\cap A$ of $x$ in the subspace $A.$ Let $y\in U(x)\setminus\{x\}.$ Since $U(x)\setminus\{x\}\subseteq O(x)\setminus\{x\},$ it follows from~\eqref{Oxx} that ${\rm ht}(y,X)<\alpha.$ By inductive hypothesis we get
${\rm ht}(y,A)\leq {\rm ht}(y,X),$ hence ${\rm ht}(y,A)<\alpha.$ The last inequality implies that $$U(x)\setminus\{x\}\subseteq{\textstyle \bigcup\big\{{\rm \bf I}_\beta(A):\beta<\alpha\big\}}.$$
Now we have two possibilities: either $x\in \bigcup\big\{{\rm \bf I}_\beta(A):\beta<\alpha\big\}$ or
$x$ is an isolated point in the subspace $A\setminus \bigcup\big\{{\rm \bf I}_\beta(A):\beta<\alpha\big\};$ that is, either ${\rm ht}(x,A)<\alpha$ or ${\rm ht}(x,A)=\alpha.$ Since $\alpha={\rm ht}(x,X),$ in both cases we have ${\rm ht}(x,A)\leq{\rm ht}(x,X).$

(ii) Suppose $A\subseteq X.$ Since for every $x\in X,$ ${\rm ht}(x,X)<{\rm ht}(X),$ it follows from~(i) that ${\rm ht}(x,A)<{\rm ht}(X)$ for all $x\in A.$ This means that ${\rm \bf I}_{{\rm ht}(X)}(A)$ is empty, so  ${\rm ht}(A)\leq{\rm ht}(X).$

(iii) Suppose  $x\in X,$ ${\rm ht}(x,X)=\alpha,$ and $O(x)$ is a neighbourhood of $x$ such that \eqref{Oxx} holds. Suppose  $y\in O(x)\setminus\{x\}.$ By~(i) we have
$${\rm ht}\big(y,O(x)\setminus\{x\}\big)\leq {\rm ht}(y,X),$$
and by~\eqref{Oxx} ${\rm ht}(y,X)<\alpha.$ Thus ${\rm ht}\big(y,O(x)\setminus\{x\}\big)<\alpha$
for all $y\in O(x)\setminus\{x\},$ therefore ${\rm ht}\big(O(x)\setminus\{x\}\big)\leq\alpha.$ This means that the neighbourhood $O(x)$ is required.
\end{proof}

\begin{teo}\label{t19.}
Suppose $X$ is a nonempty zero-dimensional $T_1\!$-space of at most countable character such that
every nonempty closed-open subset $A\subseteq X$ can be decomposed into a countable infinite disjoint union $\bigcup_n A_n$ with each $A_n$ nonempty and closed-open in $A.$ Suppose also that $Y$
is a nonempty scattered metrizable space of at most countable cardinality. Then there exists a continuous closed-open map $f:X\xrightarrow{\text{onto}}Y.$
\end{teo}

\begin{sle}\label{s20.} Let $Y$ be a nonempty Polish space of at most countable cardinality. Then there exists a continuous closed-open map from the Sorgenfrey onto $Y.$
\end{sle}

\begin{proof}[{\bf Proof of Corollary~\ref{s20.}}] The Sorgenfrey line $\mathbf{S}$ is a nonempty zero-dimensional $T_1\!$-space of at most countable character. If $A\subseteq\mathbf{S}$ is a nonempty closed-open set, then there exists a (nonempty closed-open in $\mathbf{S}\!$) half-interval $[a,b)\subseteq A,$ and we can decompose $A$ as follows:
$$\textstyle A=\big(A\setminus[a,b)\big)\cup\bigcup_{n}\big(A\cap[b-\tfrac{b-a}{n+1},b-\tfrac{b-a}{n+2})\big).$$
Since every Polish space of at most countable cardinality is scattered~\cite[\S~34, IV, Cor.\,4]{kur1}, we can use Theorem~\ref{t19.}.
\end{proof}

Let $\mathfrak{A}$ be the class of all spaces that satisfy the conditions imposed on $X$  in the premises of Theorem~\ref{t19.}; likewise, let $\mathfrak{B}$ be the class of all spaces that satisfy the conditions imposed on $Y.$

\begin{lem}\label{l21.}
Let $\bigoplus_{\lambda\in\Lambda}W_\lambda$ be a topological sum of spaces, where $0<|\Lambda|\leq\aleph_0,$ and suppose that for each space $X\in\mathfrak{A}$ and each $\lambda\in\Lambda,$ there exists a continuous closed-open map from $X$  onto $W_\lambda.$ Then for each $X\in\mathfrak{A},$ there exists a continuous closed-open map $f:X\xrightarrow{\text{onto}}\bigoplus_{\lambda\in\Lambda}W_\lambda.$
\end{lem}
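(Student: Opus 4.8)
The plan is to split $X$ into $|\Lambda|$ pairwise disjoint nonempty closed-open pieces $(X_\lambda)_{\lambda\in\Lambda}$, map each $X_\lambda$ onto the summand $W_\lambda$ by a continuous closed-open map supplied by the hypothesis, and then glue these maps into a single map $f$ with values in $\bigoplus_{\lambda\in\Lambda}W_\lambda$.

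First I would record two preliminary facts. \emph{(1)} The class $\mathfrak{A}$ is hereditary with respect to nonempty closed-open subspaces: zero-dimensionality, the $T_1$ axiom, and at most countable character all pass to subspaces; and if $A$ is closed-open in a closed-open subset $X'\subseteq X$, then $A$ is closed-open in $X$, so the required countable infinite decomposition of $A$ into nonempty closed-open pieces is furnished by the decomposition property of $X$ itself. Hence every nonempty closed-open subspace of a space in $\mathfrak{A}$ again lies in $\mathfrak{A}$. \emph{(2)} If $(X_n)_{n\in\mathbb{N}}$ is a partition of $X$ into closed-open sets, then for every subset $S\subseteq\mathbb{N}$ the union $\bigcup_{n\in S}X_n$ is again closed-open, since both it and its complement $\bigcup_{n\notin S}X_n$ are open.

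Next I would build the partition. Because $X$ is itself a nonempty closed-open subset of $X$, the decomposition property gives a partition $X=\bigcup_{n\in\mathbb{N}}X_n$ into countably infinitely many nonempty closed-open sets. As $1\leq|\Lambda|\leq\aleph_0$, I can regroup this partition, using fact (2), into exactly $|\Lambda|$ nonempty closed-open pieces indexed by $\Lambda$: if $\Lambda$ is infinite I pair it bijectively with $\mathbb{N}$ and keep the $X_n$; if $|\Lambda|=k<\aleph_0$ I keep $X_0,\dots,X_{k-2}$ and merge the tail $\bigcup_{n\geq k-1}X_n$ into the last piece. Each resulting piece $X_\lambda$ is nonempty and closed-open in $X$, hence by fact (1) belongs to $\mathfrak{A}$, so the hypothesis yields a continuous closed-open surjection $f_\lambda:X_\lambda\to W_\lambda$.

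Finally I would set $f:=\bigcup_{\lambda\in\Lambda}f_\lambda$, viewing each $W_\lambda$ as a closed-open summand of $\bigoplus_{\lambda\in\Lambda}W_\lambda$. Since $(X_\lambda)_{\lambda\in\Lambda}$ is a partition of $X$ into open sets on which $f$ agrees with the continuous $f_\lambda$, the map $f$ is continuous; it is onto because each $f_\lambda$ is onto $W_\lambda$ and the $W_\lambda$ cover the sum. For the closed-open property I would invoke the summand-wise characterization: a subset of the topological sum is closed-open if and only if its trace on each summand is closed-open there. So for closed-open $C\subseteq X$, each $C\cap X_\lambda$ is closed-open in $X_\lambda$, and since $f$ carries $X_\lambda$ into $W_\lambda$ with the $W_\lambda$ disjoint, $f(C)\cap W_\mu=f_\mu(C\cap X_\mu)$ is closed-open in $W_\mu$ for every $\mu\in\Lambda$; hence $f(C)$ is closed-open. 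I expect no serious obstacle; the only points needing care are keeping every piece nonempty while achieving exactly $|\Lambda|$ pieces (handled by the tail-merging in the finite case) and using the summand-wise characterization to obtain the closed-open property of the glued map in one stroke.
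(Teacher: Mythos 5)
Your proposal is correct and follows essentially the same route as the paper: decompose $X$ into countably many nonempty closed-open pieces via the defining property of $\mathfrak{A}$, merge the tail into one piece when $\Lambda$ is finite (the paper's $X'_m:=X\setminus(X_0\cup\ldots\cup X_{m-1})$), note that nonempty closed-open subspaces stay in $\mathfrak{A}$, and glue the maps $f_\lambda$ as a sum of maps. The only difference is that you spell out the summand-wise verification of continuity and the closed-open property, which the paper dismisses as ``easy to verify.''
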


\begin{proof}[{\bf Proof of Lemma~\ref{l21.}}] First consider the case $|\Lambda|=\aleph_0;$ let $\Lambda=\{\lambda_n:n\in\mathbb{N}\}$ and all $\lambda_n$ are different. Suppose $X$ belongs to the class $\mathfrak{A};$ then $X$ can be written as a topological sum $\bigoplus_{n\in\mathbb{N}}X_n$ of nonempty subspaces. Note that each $X_n,$ being nonempty closed-open subspace of $X,$ also lies in $\mathfrak{A}.$ So, for each $n\in\mathbb{N},$ there exists a continuous closed-open map $f_{n}:X_n\xrightarrow{\text{onto}}W_{\lambda_n}.$ It is easy to verify that the sum of maps $\bigoplus_{n}f_{n}:\bigoplus_{n}X_n\longrightarrow \bigoplus_{n}W_{\lambda_n}$ is a
continuous closed-open map from $X$ onto $\bigoplus_{\lambda\in\Lambda}W_\lambda.$

Now suppose that $0<|\Lambda|<\aleph_0;$ let $\Lambda=\{\lambda_0,\ldots,\lambda_m\}.$
If we consider the set
$\textstyle X'_m:=X\setminus(X_0\cup\ldots\cup X_{m-1}),$ which
is closed-open in $X,$ then we can write $X$ as $\textstyle X_0\oplus\ldots\oplus X_{m-1}\oplus X'_m.$
The rest of construction is similar.
\end{proof}

\begin{proof}[{\bf Proof of Theorem~\ref{t19.}}] The theorem says that for every space $X$ from the class $\mathfrak{A}$ and every space $Y$ from the class $\mathfrak{B},$ there exists a continuous closed-open map $f:X\xrightarrow{\text{onto}}Y.$ We prove this by induction on $\alpha={\rm ht}(Y).$
The inductive hypothesis says that for each $X'\in\mathfrak{A}$ and each $Y'\in\mathfrak{B}$ such that  ${\rm ht}(Y')<\alpha,$ there exists a continuous closed-open map $f':X'\rightarrow Y'.$

Suppose  $X\in\mathfrak{A},$ $Y\in\mathfrak{B}$ and ${\rm ht}(Y)=\alpha.$ Using part (iii) of Lemma~\ref{l18.}, to each $y\in Y$ assign a neighbourhood $O(y)$ such that
${\rm ht}\big(O(y)\setminus\{y\}\big)\leq{\rm ht}(y,Y).$ Since ${\rm ht}(y,Y)<{\rm ht}(Y)=\alpha,$ we have
\begin{equation}\label{Oy}
{\rm ht}\big(O(y)\setminus\{y\}\big)<\alpha.
\end{equation}
The $Y$ is a nonempty regular space of at most countable cardinality, hence $Y$ is zero-dimensional~\cite[Cor.\,6.2.8]{eng}, and for each $y\in Y,$ there is a closed-open neighbourhood  $O'(y)$ such that $y\in O'(y)\subseteq O(y).$
We may assume that if $y$ is an isolated point of $Y,$ then $O'(y)=\{y\}.$
The family $\gamma:=\big\{O'(y):y\in Y\big\}$ is at most countable cover of $Y$ and members of $\gamma$ are closed-open in $Y$. Clearly, we can construct a disjoint at most countable cover $\mu$ of $Y$ such that $\mu$ refines $\gamma$ and members of $\mu$ are nonempty and closed-open in $Y.$
Since $Y$ is nonempty and can be written as $\bigoplus\{W:W\in\mu\},$ it follows from Lemma~\ref{l21.} that to conclude the proof, it remains to build a continuous closed-open map from $Z$ onto $W$ for each $Z\in\mathfrak{A}$ and $W\in\mu.$

Suppose $Z\in\mathfrak{A}$ and $W\in\mu.$ Since $\mu$ refines $\gamma,$ there exists $y_0\in Y$ such that $W\subseteq O'(y_0).$ We consider three cases:

\emph{Case 1.}  $y_0\notin W.$ Then $W\subseteq O(y_0)\setminus\{y_0\}.$
Combining \eqref{Oy} with part~(ii) of Lemma~\ref{l18.}, we get ${\rm ht}(W)<\alpha.$
Since $W$ is a nonempty subspace of $Y,$ we have $W\in\mathfrak{B},$ therefore a continuous closed-open map $f:Z\xrightarrow{\text{onto}}W$ exists by the inductive hypothesis.

\emph{Case 2.}  $y_0\in W$ and $y_0$ is an isolated point of $W. $ Then $y_0$ is an isolated point of $Y,$ therefore $O'(y_0)=\{y_0\},$ whence $W=\{y_0\}.$ Clearly, there exists a continuous closed-open map $f:Z\xrightarrow{\text{onto}}W$ in this case.

\emph{Case 3.}  $y_0\in W$ and $y_0$ is not an isolated point of $W.$
Let $z_0\in Z.$ We shall build a sequence $(Z_n)$ of subsets of $Z$ and a sequence $(W_n)$ of subsets of $W$ such that the following holds:
\begin{itemize}
  \item[(a)] $Z_n$ is a nonempty closed-open subset of $Z$ for each $n\in\mathbb{N}.$
  \item[(${\rm \tilde{a}} \mathsurround=0pt $)] $W_n$ is a nonempty closed-open subset of $W$ for each $n\in\mathbb{N}.$
  \item[(b)] The family $(Z_n)_{n\in\mathbb{N}}$ is a partition of $Z\setminus\{z_0\}.$
  \item[(${\rm \tilde{b}} \mathsurround=0pt $)] The family $(W_n)_{n\in\mathbb{N}}$ is a partition of $W\setminus\{y_0\}.$
  \item[(c)] The family $\big\{Z\setminus(Z_0\cup\ldots\cup Z_n):n\in\mathbb{N}\big\}$ is a base for the space $Z$ at the point $z_0.$
  \item[(${\rm \tilde{c}} \mathsurround=0pt $)] The family $\big\{W\setminus(W_0\cup\ldots\cup W_n):n\in\mathbb{N}\big\}$ is a base for the space $W$ at the point $y_0.$
\end{itemize}
Next we shall build a map $f:Z\rightarrow W$ such that:
\begin{itemize}
  \item[(d)] $\mathsurround=0pt f(z_0)=y_0.$
  \item[(e)] $\mathsurround=0pt f(Z_n)=W_n$ for each $n\in\mathbb{N}.$
  \item[(f)] The restriction $f|Z_n:Z_n\rightarrow W_n$ is continuous and closed-open for each $n\in\mathbb{N}.$
\end{itemize}
It follows from (a)--(f) and ($\mathsurround=0pt {\rm \tilde{a}}$)--($\mathsurround=0pt {\rm \tilde{c}}$) that the map $f:Z\rightarrow W$ is surjective, continuous and closed-open.
So we must accomplish the construction to finish the proof.

First we build the sequence $(Z_n)$ of subsets of $Z.$ Since $Z\in \mathfrak{A}$ and the set $\{z_0\}\subseteq Z$ cannot be decomposed into a countable infinite disjoint union,
$\{z_0\}$ is not closed-open in $Z;$ that is, $z_0$ is not an isolated point of $Z.$ Since $Z$ is a $T_1\!$-space of at most countable character, we can build a strictly decreasing sequence $U_0\supsetneqq U_1\supsetneqq\ldots$ of open sets in $Z$ such that the family $\{U_n:n\in\mathbb{N}\}$ is a base for $Z$ at $z_0$ and $\bigcap_n U_n=\{z_0\}.$ The space $Z$ is zero-dimensional, therefore we may assume that all $U_n$ are closed-open in $Z;$ we may also assume that $U_0=Z.$ Now let $Z_n:=U_n\setminus U_{n+1}.$  Clearly, the sequence $(Z_n)$ satisfies (a), (b), and (c).

Since $y_0$ is not an isolated point of $W,$ and $W$ is a subspace of  $Y,$  which is metrizable and (as was mentioned above) zero-dimensional, the sequence $(W_n)$ that satisfies (${\rm \tilde{a}} \mathsurround=0pt $), (${\rm \tilde{b}} \mathsurround=0pt $), and (${\rm \tilde{c}} \mathsurround=0pt $) can be built in the same way.

Finally, we  build the map $f:Z\rightarrow W.$ By~(${\rm \tilde{b}} \mathsurround=0pt $), $W_n\subseteq W\setminus\{y_0\}$ for each $n\in\mathbb{N}.$ The choice of $y_0$ implies that $W\setminus\{y_0\}\subseteq O(y_0)\setminus\{y_0\}.$ Using~\eqref{Oy} and part~(ii) of Lemma~\ref{l18.}, we obtain ${\rm ht}(W_n)<\alpha.$ Since $Z\in\mathfrak{A}$ and  $W\subseteq Y\in\mathfrak{B},$ it follows from~(a) and (${\rm \tilde{a}} \mathsurround=0pt $) that $Z_n\in \mathfrak{A}$ and  $W_n\in\mathfrak{B}.$ Now, by the inductive hypothesis,
there exists a continuous closed-open map $f_n:Z_n\xrightarrow{\text{onto}}W_n.$

Let the map $f:Z\rightarrow W$ be given by $f(z_0):=y_0$ and $f(z):=f_n(z)$ for $z\in Z_n.$ This completes the proof, since $f$ satisfies (d)--(f).
\end{proof}

The next lemma gives a reverse inclusion for descriptions of metrizable images of the Sorgenfrey line under continuous closed and under continuous closed-open maps.

\begin{lem}\label{l22.}
Suppose a metrizable space $Y$ is an image of the Sorgenfrey line under a continuous closed map.
Then $Y$ is a Polish space of at most countable cardinality.
\end{lem}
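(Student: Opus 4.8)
\emph{The plan is} to prove the two assertions ``$Y$ is Polish'' and ``$Y$ is scattered'' separately: once $Y$ is scattered and, being a separable metrizable space, second countable, it is automatically at most countable, since in a second countable scattered space each nonempty Cantor--Bendixson level is a countable discrete set and there are only countably many of them. Separability and second countability of $Y$ are immediate, $Y$ being a metrizable continuous image of the separable space $\mathbf S$. For completeness I would pass to the Baire space via Lemma~\ref{l6.}: there is a topology $\tau$ on $\mathbb R$ weaker than $\tau_{\scriptscriptstyle\mathbf S}$ with $(\mathbb R,\tau)\cong\mathbb N^{\mathbb N}$ and $f\colon(\mathbb R,\tau)\to Y$ continuous; as every $\tau$-closed set is $\tau_{\scriptscriptstyle\mathbf S}$-closed, the map $f$ is in fact a \emph{closed} continuous surjection of the Polish space $(\mathbb R,\tau)$ onto $Y$. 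Now invoking Vainshtein's lemma for closed maps of metrizable spaces~\cite{eng}, each fibre boundary $\partial f^{-1}(y)$ is compact. Splitting $\mathbb N^{\mathbb N}$ into the open set on which $f$ is locally constant and its closed complement $B=\bigcup_{y}\partial f^{-1}(y)$, the restriction $f\!\upharpoonright\! B$ is a perfect map of the Polish space $B$ onto $f(B)$, whence $f(B)$ is Polish~\cite{eng}; its complement in $Y$ is exactly the set of isolated points of $Y$, which is open, discrete and countable, hence Polish. Since $f(B)$ is clopen, $Y$ is a topological sum of two Polish spaces and is therefore Polish.

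For scatteredness I would argue by contradiction, now using $\mathbf S$ itself. If $Y$ were not scattered, its perfect kernel $P$ (the complement of the union of all Cantor--Bendixson levels) would be a nonempty closed crowded subspace; being closed in the Polish space $Y$ it is Polish, and a nonempty perfect Polish space contains a homeomorphic copy $K$ of the Cantor set~\cite{kech}. As $K$ is compact, it is closed in $Y$, so $C:=f^{-1}(K)$ is closed in $\mathbf S$ and $f\!\upharpoonright\! C\colon C\to K$ is a continuous closed surjection onto an uncountable compact metric space. Thus it suffices to derive a contradiction from the existence of a continuous closed surjection of a closed subspace $C$ of $\mathbf S$ onto a Cantor set.

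This last contradiction is \emph{the heart of the proof and the genuinely Sorgenfrey-specific step}; note it cannot be carried out in the coarser Baire topology, since there the Cantor set \emph{is} a closed continuous image. First I would localise to a bounded piece: writing $C=\bigsqcup_{k\in\mathbb Z}\bigl(C\cap[k,k+1)\bigr)$ into clopen bounded summands, each image $f\bigl(C\cap[k,k+1)\bigr)$ is closed in $K$, so by the Baire category theorem one of them has nonempty interior and hence contains a clopen copy of the Cantor set; restricting $f$ to the corresponding clopen full preimage, I may assume that $C$ is bounded below and $f\colon C\to K$ is a continuous closed surjection onto a Cantor set $K$. For each $z\in K$ the fibre $f^{-1}(z)$ is closed in $\mathbf S$ and bounded below, hence contains its infimum $q_z:=\min f^{-1}(z)\in C$; the assignment $z\mapsto q_z$ is injective, so $B:=\{q_z:z\in K\}$ has cardinality $2^{\aleph_0}$. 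An uncountable set of reals is not well-ordered by the reverse order, so $B$ contains a strictly increasing sequence $q_{z_0}<q_{z_1}<\cdots$, and after passing to a subsequence I may assume $z_n\to z^\ast$ in $K$ with all $z_n\neq z^\ast$. But a strictly increasing sequence of reals converges only from the left, so $\{q_{z_n}:n\in\mathbb N\}$ has no limit point in $\mathbf S$ and is thus closed and discrete in $C$; since $f$ is closed, its image $\{z_n:n\in\mathbb N\}$ must be closed in $K$, contradicting $z_n\to z^\ast\notin\{z_n:n\in\mathbb N\}$.

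This contradiction shows that $Y$ is scattered, and together with the first paragraph it follows that $Y$ is a Polish space of at most countable cardinality. The step I expect to be the main obstacle is precisely the construction just described: the whole argument hinges on exploiting the one-sidedness of $\mathbf S$ to convert a sequence of fibre minima that converges \emph{from the left} into a closed discrete set, whose image under the closed map $f$ is then forced to be closed although it is not.
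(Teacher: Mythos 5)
Your proposal is essentially correct and takes a genuinely different route from the paper's, but one step is wrong as stated: the claim that $f(B)$ is clopen, so that $Y$ is a topological sum of $f(B)$ and the set $I$ of isolated points. What your argument actually gives is that $f(B)=Y\setminus I$ is the set of non-isolated points, which is closed (equivalently, $I$ is open); it need not be open, because $I$ need not be closed. A concrete counterexample inside the very class under discussion: $Y=\{0\}\cup\{2^{-n}:n\in\mathbb{N}\},$ which by Corollary~\ref{s20.} really is a continuous closed image of $\mathbf{S},$ and here $f(B)=\{0\}$ is not open, so no sum decomposition exists. The conclusion survives with a standard repair: $I$ is discrete, hence completely metrizable, and $f(B)$ is Polish by your perfect-map argument; therefore each of the two sets is a $G_\delta\!$-set in the completion of $Y$ (a completely metrizable subspace of a metric space is always $G_\delta$ in it~\cite{eng}), a union of \emph{two} $G_\delta\!$-sets is again $G_\delta,$ so $Y$ is $G_\delta$ in its completion and hence Polish~\cite[Th.\,4.3.23]{eng}. (Alternatively, your whole first part is a re-proof of Va\u{\i}nshte\u{\i}n's classical theorem that a metrizable continuous closed image of a completely metrizable space is completely metrizable, which you could cite directly; your observation that closedness of $f$ persists when the Sorgenfrey topology is coarsened to the Baire topology of Lemma~\ref{l6.} --- since every $\tau\!$-closed set is $\tau_{\scriptscriptstyle\mathbf{S}}\!$-closed --- is correct and is the key to making that theorem applicable.)

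The remaining steps all check out, and the comparison with the paper is instructive. The paper works downstairs in $\mathbf{S}$: paracompactness yields a closed $H\subseteq\mathbf{S}$ with $f\upharpoonright H$ perfect; countability of $H$ comes from the injection $x\mapsto U(x)\subseteq[x,x+1)$ into a countable base (this is where one-sidedness of $\mathbf{S}$ enters); scatteredness of $H$ is proved by building a binary tree of nondegenerate segments inside $\bigcap_n U_n$ using perfect normality; and complete metrizability is transferred from $H$ to $Y$ along the perfect map. You instead prove Polishness of $Y$ first (via Lemma~\ref{l6.} and the fiber-boundary lemma), then use it to extract a Cantor set $K$ from the perfect kernel --- note this order is forced in your approach, since a non-scattered separable metrizable space need not contain a Cantor set --- and your Sorgenfrey-specific contradiction is different from the paper's: fibers, being Sorgenfrey-closed and bounded below, attain their infima; the $2^{\aleph_0}$ fiber minima contain a strictly increasing sequence (an uncountable set of reals is not reverse well-ordered); a strictly increasing sequence is closed and discrete in $\mathbf{S}$ because it can only converge from the left; and then closedness of $f$ forces $\{z_n\}$ to be closed in $K,$ contradicting $z_n\to z^\ast.$ This is correct, as is your final counting step (scattered plus second countable implies countably many Cantor--Bendixson levels, each countable). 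The paper's route buys self-containedness and produces the auxiliary countable scattered $H$ explicitly; yours buys reuse of Lemma~\ref{l6.} together with standard metric-space machinery, and it establishes scatteredness of $Y$ itself rather than of an auxiliary subspace.
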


\begin{proof}
Let $f$ be a continuous closed map from the Sorgenfrey line $\mathbf{S}$ onto $Y.$
First let us show that $Y$ is at most countable.
Since the Sorgenfrey line is a paracompact space~\cite[Ex.\,5.1.31]{eng}
and  $Y$ is a $T_1\!$-space of at most countable character,
it follows that there exists a closed subspace $H\subseteq\mathbf{S}$ such that $f(H)=Y$ and the restriction $f|H: H\rightarrow Y$ is a perfect map~\cite[Ch.\,6, Pr.\,87]{arh-pon}.
If we recall that the Sorgenfrey topology $\tau_{\scriptscriptstyle \mathbf{S}}$
is stronger than the Euclidian topology, which is metrizable, then we can show that $H$ is metrizable~\cite[Ch.\,6, Pr.\,62]{arh-pon}.
The Sorgenfrey line is hereditarily separable~\cite[Ex.\,2.1.I]{eng},
therefore its metrizable subspace $H$ has at most countable base $\mathcal{B}.$
Consider a map $U:H\rightarrow \mathcal{B}$ such that $x\in U(x)\subseteq [x,x+1)$ for all $x\in H.$ The map $U$ is one-to-one, hence $H$ is at most countable, consequently $Y$ is also at most countable.

Now let us show that $Y$ is Polish; it is enough to show that $H$ is scattered.
Indeed, being scattered and metrizable, $H$ is a $G_\delta\mathsurround=0pt $-set in its completion~\cite[\S~24, III, Cor.\,1a]{kur1}, and therefore is completely metrizable~\cite[Th.\,4.3.23]{eng}. Since $Y$ is an image of $H$ under a perfect map, $Y$ is also
completely metrizable~\cite[Th.\,4.3.26 and Th.\,3.9.10]{eng}, hence, being at most countable, is a Polish space.

It remains to show that $H$ is scattered. Conversely, suppose there exists a nonempty subspace $A\subseteq H$ with no isolated points. Let $B$ be the closure of $A$ in $H;$ $B$ has no isolated points and is closed in $\mathbf{S},$ since $H$ is closed in $\mathbf{S}.$ The Sorgenfrey line is perfectly normal~\cite[Ch.\,2, Pr.\,104]{arh-pon}, therefore there is a sequence $(U_n)$ of open sets in $\mathbf{S}$ such that $B=\bigcap_{n}U_n.$ We shall build a family $\big([a_s,b_s]\big)_{s\in\{0,1\}^{<\mathbb{N}}}$ of  nondegenerate (that is, with $a_s<b_s$) real segments such that the following conditions are satisfied:
\begin{itemize}
  \item[(a)] $[a_s,b_s]\supseteq [a_{s\hat{\ }k},b_{s\hat{\ }k}]\ $ for all $s\in\{0,1\}^{<\mathbb{N}}$ and $k\in\{0,1\}.$
  \item[(b)] $[a_{s\hat{\ }0},b_{s\hat{\ }0}]\cap [a_{s\hat{\ }1},b_{s\hat{\ }1}]=\varnothing\ $ for all $s\in\{0,1\}^{<\mathbb{N}}.$
  \item[(c)] $[a_s,b_s]\subseteq U_{{\rm length}(s)}\ $ for all $s\in\{0,1\}^{<\mathbb{N}}.$
\end{itemize}

Let us show that conditions (a)--(c) yield a contradiction, which completes the proof. Condition~(b) implies that the family $\big(\bigcap_{n}[a_{\sigma|n},b_{\sigma|n}]\big)_{\sigma\in\{0,1\}^\mathbb{N}}$ is disjoint;
(a) implies that every member $\bigcap_{n}[a_{\sigma|n},b_{\sigma|n}]$ of this family is not empty; (c) implies that $\bigcap_{n}[a_{\sigma|n},b_{\sigma|n}]\subseteq\bigcap_{n}U_n$ for each $\sigma\in\{0,1\}^{\mathbb{N}}.$
Therefore the set $\bigcap_{n}U_n$ is uncountable. On the other hand, $\bigcap_{n}U_n=B\subseteq H$ and $H$ is at most countable.

We construct the family $\big([a_s,b_s]\big)_{s\in\{0,1\}^{<\mathbb{N}}}$ by recursion on ${\rm length}(s).$ If ${\rm length}(s)=0$ (that is, $\mathsurround=0pt s=\varnothing$), choose $a_\varnothing\in B$ arbitrary.
Since $B\subseteq U_0$ and $U_0$ is open in $\mathbf{S},$ there is $b_\varnothing>a_\varnothing$ such that $[a_\varnothing,b_\varnothing]\subseteq U_0.$ Now suppose inductively that a nondegenerate segment $[a_s,b_s]$ is already constructed and that $a_s\in B.$ The point $a_s$ is not isolated in $B,$ therefore we can find two different points $a_{s\hat{\ }0}, a_{s\hat{\ }1}\in(a_s,b_s)\cap B.$ As before, for each $k\in\{0,1\},$ there is $b_{s\hat{\ }k}>a_{s\hat{\ }k}$ such that $[a_{s\hat{\ }k},b_{s\hat{\ }k}]\subseteq U_{{\rm length}(s\hat{\ }k)}.$ Clearly, we can choose $b_{s\hat{\ }0}$ and $b_{s\hat{\ }1}$ in such a way that conditions (a)--(c) are satisfied. This completes the proof. \end{proof}

The following statement is an immediate  consequence of Lemma~\ref{l22.} and Corollary~\ref{s20.}:

\begin{sle}\label{s23.}
Let $Y$ be a metrizable space. Then
$Y$ is an images of the Sorgenfrey line under some continuous closed \textup{(}closed-open\textup{)} map iff $Y$ is a nonempty at most countable Polish space.
\end{sle}

\begin{zam}\label{z24.}
Suppose $f$ is a perfect map from the Sorgenfrey line onto a space $Y.$ Then $Y$ is not metrizable.
\end{zam}

\begin{proof}
Assume the contrary. In the beginning of proof of Lemma~\ref{l22.}  we showed that a subspace $H\subseteq\mathbf{S}$ is metrizable. By the same argument we can derive a metrizability of the Sorgenfrey line, which is not the case~\cite[Ch.\,2, Pr.\,104]{arh-pon}.
\end{proof}


\end{document}